\newcommand{\be}{\begin{equation}}
\newcommand{\ee}{\end{equation}}
\theoremstyle{plain}
\newtheorem{thm}{Theorem}[section]
\newtheorem{prop}{Proposition}[section]
\newtheorem{cor}[prop]{Corollary}
\newtheorem{rem}{Remark}[section]
\newtheorem{example}{Example}[section]
\newtheorem{definition}{Definition}[section]
\author{Undine Leopold, Horst Martini}
\title{Geometry of simplices in Minkowski spaces}
\begin{document}
\maketitle

\begin{abstract}
There are many problems and configurations in Euclidean geometry that were never
extended to the framework of (normed or) finite dimensional real Banach spaces,
although their original versions are inspiring for this type of generalization, and the
analogous definitions for normed spaces represent a promising topic. An example is
the geometry of simplices in non-Euclidean normed spaces. We present new generalizations
of well known properties of Euclidean simplices. These results refer to analogues of
circumcenters, Euler lines, and Feuerbach spheres of simplices
in normed spaces. Using duality, we also get natural theorems on angular bisectors as well as
in- and exspheres of (dual) simplices.\\
\textbf{Keywords and phrases:} 
angular bisector, antinorm, Birkhoff orthogonality, centroid, circumsphere, 
Euler line, exsphere, Feuerbach sphere, duality, insphere, Minkowskian height, Minkowskian simplex, Minkowski space, normed space, Radon norm, reducedness\\[0.5ex]
\textbf{2010 Mathematics Subject Classification:} 46B20, 51M05, 51M20, 52A10, 52A20, 52A21, 52B11
\end{abstract}
%%%%%%%%%%%%%%%%%%%%%%%%%%%%%%%%%%%%%%%%%%%%%%%%%%%%%%%%%%%%%%%%%%%%%%%%%
\section{Introduction}\label{sec:intro}
%%%%%%%%%%%%%%%%%%%%%%%%%%%%%%%%%%%%%%%%%%%%%%%%%%%%%%%%%%%%%%%%%%%%%%%%%

Looking at basic literature on the geometry of finite dimensional real Banach spaces (also called Minkowski spaces; see, e.g., the monograph \cite{t1996:mg} and the surveys \cite{msw2001:tgomsas1} and \cite{ms2004:tgomsas2}), the reader will observe that there exists no systematic or satisfying representation of results in the direction of (also higher dimensional) Elementary Geometry in such spaces. In other words, Elementary Geometry of Minkowski spaces is not really a developed field. 

Although there are well known theorems in Euclidean geometry which, in many cases, are conveniently extendable to Minkowski spaces, until now this was not systematically done. An example is the geometry of simplices in non-Euclidean Minkowski spaces - interesting properties certainly wait for their discovery! Inspired by this lack of results, we derive new extensions (Minkowskian analogues) of well known properties of Euclidean simplices.  These results refer especially (but not only) to circumspheres and circumcenters, allowing us to define a weak notion of regularity. Via duality, we also get natural theorems on angular bisectors as well as in- and exspheres of (dual) simplices.

A $d$-dimensional (\emph{normed} or) \emph{Minkowski space} $(\mathbb{R}^d,\|\cdot\|)$ is the vector space $\mathbb{R}^d$ equipped with a norm $\| \cdot \|$. A norm can be given implicitly by its \emph{unit ball}, which is a convex body centered at the origin $o$; its boundary is the \emph{unit sphere} of the normed space. Any homothet of the unit ball is called a \emph{Minkowskian ball} and denoted by $B(X,r)$, where $X$ is its center and $r>0$ its radius, while its boundary is the \emph{Minkowskian sphere} $S(X,r)$. Two-dimensional Minkowski spaces are \emph{Minkowski planes}, and for an overview on what has been done in the geometry of normed planes and spaces we refer again to \cite{t1996:mg}, \cite{msw2001:tgomsas1}, and \cite{ms2004:tgomsas2}.

The fundamental difference between non-Euclidean Minkowski spaces and the Euclidean space is the absence of an inner product, and thus the notions of angle and orthogonality do not exist in the usual sense. Nevertheless, several \emph{types of orthogonality} can be defined (see \cite{ab1988:oinlsas1},\cite{ab1989:oinlsas2}, and \cite{amw2012:oboaioinls} for an overview; for angles we refer to \cite{bhmt2017:ains}), with \emph{isosceles} and \emph{Birkhoff orthogonalities} being the most prominent examples. We say that $y$ is \emph{Birkhoff orthogonal} to $x$, denoted $x\perp_B y$, when $\|x\|\leq \|x+\alpha y\|$ for any $\alpha \in \mathbb{R}$. 

For any two distinct points $P$, $Q$, we denote by $[PQ]$ the \emph{closed segment}, by $\langle PQ \rangle$ the \emph{spanned line} (affine hull), and by $[PQ\rangle$ the \emph{ray} $\{P+\lambda (Q-P)\ \vert\ \lambda \geq 0\}$; we write $\|[PQ]\|:=\|Q-P\|$ for the \emph{length} of $[PQ]$.We will use the common abbreviations $\mathrm{aff}$, $\mathrm{conv}$, $\partial$, and $\mathrm{cone} $ for the affine hull, convex hull, boundary and cone over a set, respectively.

In this article, we focus on the geometry of simplices in Minkowski spaces. As usual, a \emph{$d$-simplex} is the convex hull of $d+1$ points in general linear position, or the non-empty intersection of $d+1$ closed half-spaces in general position.

In the last section, we discuss \emph{dual} statements. For this purpose, the dual space of linear functionals on $(\mathbb{R}^d,\|\cdot\|)$, a $d$-dimensional real vector space in its own right, is identified with the original Minkowski space by fixing bases and using an auxiliary Euclidean structure.

%%%%%%%%%%%%%%%%%%%%%%%%%%%%%%%%%%%%%%%%%%%%%%%%%%%%%%%%%%%%%%%%%%%%%%%%%%%%%
\section{Circumcenters}\label{sec:circumcenter}
%%%%%%%%%%%%%%%%%%%%%%%%%%%%%%%%%%%%%%%%%%%%%%%%%%%%%%%%%%%%%%%%%%%%%%%%%%%%%

Before presenting our results on \emph{circumcenters of simplices}, we underline that we mean the \emph{centers of circumspheres} (or \emph{-balls}) \emph{of simplices}, i.e., of Minkowskian spheres containing all the vertices of the respective simplex (see, e.g., \cite{ams2012:medcacinpp1}). A related, but different notion is that of \emph{minimal enclosing spheres of simplices}, sometimes also called circumspheres (cf., e.g., \cite{ams2012:medcacinpp2}); this notion is not discussed here. In the two-dimensional situation, circumspheres and -balls are called \emph{circumcircles} and \emph{-discs}.
In Minkowski spaces, simplices may have several, precisely one, or no circumcenter at all, depending on the shape of the unit ball, see Figure \ref{fig:1}. Examples without circumcenters may only be constructed for non-smooth norms, as all smooth norms allow inscription into a ball \cite{g1969:sioah,m1987:tdoamispocg}. \emph{We focus on the case where there is at least one circumcenter.}

\begin{figure}
\begin{center}
\includegraphics[width=.7\textwidth]{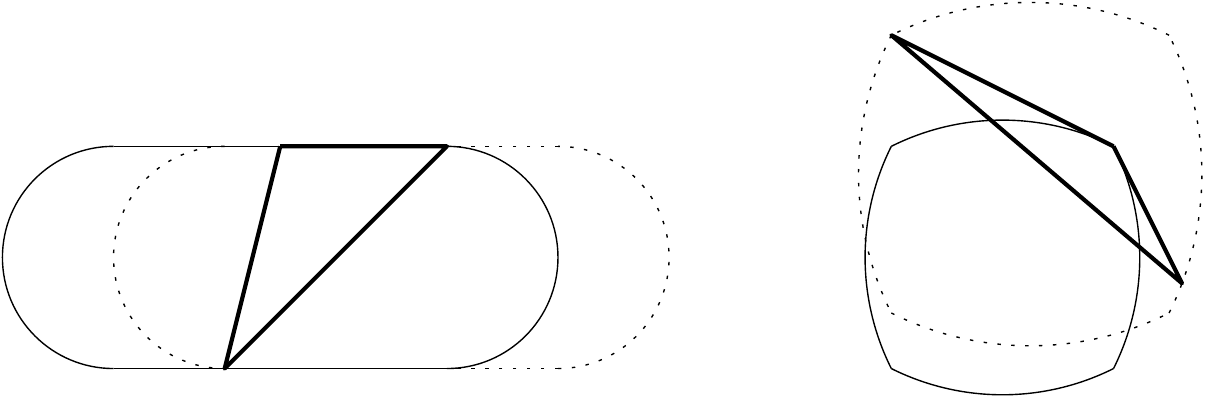}
\end{center}
\caption{On the left: A triangle with several circumcenters, as illustrated by suitable translates of the unit ball. The triangle on the right does not possess any circumcenter with respect to the indicated norm.}
\label{fig:1}
\end{figure}

Inspired by the work \cite{ams2012:medcacinpp1} of Alonso, Martini and Spirova for the planar case (see also \cite{v2017:oocinp}), we establish results about the location of circumcenters in the $d$-dimensional case, i.e., about the \emph{centers of circumspheres} as defined above. Incidentally, this also proves some statements about the planar case in a more condensed form than in \cite{ams2012:medcacinpp1}. We define the \emph{medial hyperplane} between a facet $F$ of a $d$-dimensional simplex and the opposite vertex $A$ as the hyperplane bisecting every segment from $A$ to a point in $F$.

\begin{thm}\label{thm:2.1}
Let $T$ be a Minkowskian $d$-simplex with circumball $B$ and circumsphere $S=\partial B$ centered at $M$. Let $F$ be an arbitrary facet of $T$, let $A$ be the opposite vertex, and let $E$ be the medial hyperplane  between $F$ and $A$. The following are equivalent.\\
a) $M$ lies in the same open halfspace as $A$ with respect to $E$.\\
b) $M\notin \mathrm{cone}((\mathrm{aff}(F)\cap B),A):=\{A+\lambda((\mathrm{aff}(F)\cap B)-A),\lambda\geq 0\}$.\\
\end{thm}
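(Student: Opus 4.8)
The plan is to reduce the whole statement to a one-dimensional question on the line $\langle A,M\rangle$ and to exploit the central symmetry of the circumball $B$ about its center $M$. First I would fix an affine coordinate adapted to the configuration: let $\ell$ be the affine function that vanishes on the hyperplane $H:=\mathrm{aff}(F)$ and satisfies $\ell(A)=1$. Since the medial hyperplane $E$ is the image of $H$ under the homothety with center $A$ and ratio $\tfrac12$, it is parallel to $H$ and satisfies $E=\{\ell=\tfrac12\}$, with the open halfspace containing $A$ being $\{\ell>\tfrac12\}$. Thus condition (a) is simply $\ell(M)>\tfrac12$, and the task becomes to show this is equivalent to $M\notin K$, where $K:=\mathrm{cone}((H\cap B),A)$.

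Next I would translate membership in the cone into an intersection condition. Writing $C:=H\cap B$ (a convex body in $H$ containing $F$, since the vertices of $F$ lie on $S=\partial B$), a point $M\ne A$ lies in $K$ exactly when the ray $[A,M\rangle$ meets $H$ in a point $Q$ with $Q\in B$ (equivalently $Q\in C$). Indeed, from $M=A+\lambda(Q-A)$ with $\lambda>0$ and $\ell(Q)=0$ one reads off $\lambda=1-\ell(M)$, so such a $Q$ exists precisely when $\ell(M)\ne 1$, and then $Q$ is the unique intersection of $\langle A,M\rangle$ with $H$, lying on the forward ray iff $\lambda>0$. Hence everything comes down to deciding when this $Q$ lies in $B$.

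The key step uses that $B$ is centrally symmetric about $M$ and that $A\in\partial B$ (because $A$ is a vertex of $T$, so $\|A-M\|=r$). Therefore the chord cut out of $B$ by the line $\langle A,M\rangle$ is exactly $[A,A']$ with $A'=2M-A\in\partial B$ and midpoint $M$. Reading the endpoints in the $\ell$-coordinate, $A$ sits at $\ell=1$ and $A'$ at $\ell=2\ell(M)-1$, so the chord occupies the $\ell$-interval with endpoints $1$ and $2\ell(M)-1$. The point $Q$ (at $\ell=0$) lies in this interval if and only if $\ell(M)\le\tfrac12$. Combining the three steps gives $M\in K\iff \ell(M)\le\tfrac12$, i.e. $M\notin K\iff\ell(M)>\tfrac12$, which is precisely (a)$\iff$(b).

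I expect the main obstacle to be the careful case analysis rather than any deep idea: one must handle $\ell(M)=1$, where $\langle A,M\rangle$ is parallel to $H$ and no intersection point $Q$ exists, and $\ell(M)>1$, where $M$ lies behind the apex; in both cases $M\notin K$ holds trivially while $\ell(M)>\tfrac12$ holds as well, so they are consistent with the claimed equivalence. The one point requiring genuine attention is the orientation of the ray in the cone-membership criterion, namely ensuring that $Q$ is the forward intersection on $[A,M\rangle$ and not a spurious point on the opposite ray, which is exactly what the sign of $\lambda=1-\ell(M)$ controls.
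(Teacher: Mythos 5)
Your proof is correct and takes essentially the same route as the paper's: both arguments reduce everything to the line through $A$ and $M$, locate its intersection point with $\mathrm{aff}(F)$ (your $Q$, the paper's $D$), and show that conditions (a) and (b) are each equivalent to that point lying outside $B$ --- you decide ball membership via the chord $[A,\,2M-A]$ and the affine coordinate $\ell$, while the paper does it via the distance comparison $\|[MD]\|>\|[AM]\|=r$, which is the same fact in different clothing. The only substantive difference is that you explicitly dispose of the degenerate cases $\ell(M)\geq 1$, where the ray $[AM\rangle$ never meets $\mathrm{aff}(F)$; the paper's proof passes over these in silence.
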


\begin{proof}
Let $D$ be the point of intersection of the ray $[AM\rangle$ with $\mathrm{aff}(F)$. Now, $M$ lies in the same open halfspace as $A$ with respect to $E$ if and only if $\|[MD]\|>\|[AM]\|$. Furthermore $\|[MD]\|>\|[AM]\|$ if and only if $D$ is outside of $B$, i.e., if and only if $D$ is not in $\mathrm{aff}(F)\cap B$. Finally, $D$ is not in $\mathrm{aff}(F)\cap B$ if and only if $M$ lies outside of $\mathrm{cone}((\mathrm{aff}(F)\cap B),A)$, which proves the theorem.
\end{proof}

For two-dimensional simplices, we are able to extract additional information. 

\begin{thm}\label{thm:2.2}
Let $T$ be a triangle in a Minkowski plane, with circumball $B$ and circumcircle $S=\partial B$ centered at $M$. Let $F$ be an arbitrary side of $T$ opposite to a vertex $A$, and let $E$ be the medial line bisecting every segment between $F$ and $A$. Suppose $M$ lies in $\mathrm{cone}((\mathrm{aff}(F)\cap B)\setminus F,A)$. Then $M$ lies in $E$.
\end{thm}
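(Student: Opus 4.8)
The plan is to show that under the stated hypothesis the side $F$ must lie on a \emph{flat edge} of the circumsphere, and that this forces $M$ to be equidistant from $A$ and from the point where the line $\langle AM\rangle$ meets $\mathrm{aff}(F)$; equidistance then pins $M$ to the midpoint of that chord, hence to $E$. Write $G:=\mathrm{aff}(F)\cap B$ for the chord cut out of the circumball by the line carrying $F$, and let $P,Q$ denote the endpoints of $F$ (the two vertices of $T$ other than $A$). The hypothesis $M\in\mathrm{cone}(G\setminus F,A)$ presupposes $G\setminus F\neq\emptyset$, i.e.\ $G\supsetneq F$, so the chord $G$ extends strictly beyond $P$ or $Q$.

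First I would establish the key structural fact that $G\subseteq S$. Since $P,Q\in S=\partial B$ while $G\supsetneq F=[PQ]$, at least one of $P,Q$ lies in the relative interior of the chord $G$. Invoking the elementary convexity fact that a boundary point of a convex body lying in the relative interior of a segment contained in that body forces the whole segment onto the boundary, I conclude $G\subseteq\partial B=S$. In particular every point of $G$ is at Minkowskian distance $r$ from $M$, where $r$ is the circumradius.

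Next, following the proof of Theorem~\ref{thm:2.1}, let $D$ be the intersection of the ray $[AM\rangle$ with $\mathrm{aff}(F)$; this is well defined because $A\notin\mathrm{aff}(F)$. The cone hypothesis gives $M=A+\lambda(x-A)$ for some $x\in G\setminus F$ and $\lambda\geq 0$, and since $\|A-M\|=r>0$ we have $\lambda>0$ and $M\neq A$; the ray $[AM\rangle$ therefore passes through $x\in\mathrm{aff}(F)$, so $D=x\in G\setminus F$. By the previous paragraph $D\in S$, whence $\|[MD]\|=r=\|[AM]\|$.

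It remains to exploit this equidistance. Because $A$, $M$, $D$ are collinear with $M=A+\lambda(D-A)$, homogeneity of the norm along $\langle AD\rangle$ gives $\|[AM]\|=\lambda\|D-A\|$ and $\|[MD]\|=|1-\lambda|\,\|D-A\|$; equating these and using $\|D-A\|>0$ yields $\lambda=\tfrac12$. Hence $M$ is the midpoint of $[AD]$, and since $D\in\mathrm{aff}(F)$ this midpoint lies on the medial line $E$ by definition of $E$. I expect the only genuine obstacle to be the structural step $G\subseteq S$: recognising that the hypothesis forces $F$ to sit on a flat edge of the circumsphere is precisely where the non-Euclidean geometry enters and is what produces the crucial equality $\|[MD]\|=\|[AM]\|$; once that is in hand, the conclusion is a one-line parametric computation resting on Theorem~\ref{thm:2.1}.
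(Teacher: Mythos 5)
Your proposal is correct and follows essentially the same route as the paper: first show that the hypothesis forces the whole chord $\mathrm{aff}(F)\cap B$ to lie on the circumcircle $S$ (the paper argues via three collinear boundary points, you via the equivalent relative-interior fact), then conclude $\|[MD]\|=\|[AM]\|$ for the intersection point $D$ of $[AM\rangle$ with $\mathrm{aff}(F)$, so that $M$ is the midpoint of $[AD]$ and hence lies on $E$. The only difference is that you spell out details (identifying $D$ with the cone point $x$, and the parametric computation giving $\lambda=\tfrac12$) that the paper leaves implicit.
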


\begin{proof} First, we note that if $(\mathrm{aff}(F)\cap B)\setminus F\neq \emptyset$ in the plane, then the planar circumball $B$ is not strictly convex, since at least $3$ points on the line $\mathrm{aff}(F)$, namely the vertices of $F$ and at least one other point in $(\mathrm{aff}(F)\cap B)\setminus F$, are part of the circle $S=\partial B$, leading to $(\mathrm{aff}(F)\cap B)\subset S$.
Thus, if $M$ lies in $\mathrm{cone}((\mathrm{aff}(F)\cap B)\setminus F,A)$ and $D$ is the point of intersection of the ray $[AM\rangle$ with $\mathrm{aff}(F)$, then $\|[MD]\|=\|[AM]\|$, and $M$ lies in $E$.
\end{proof}

Combining Theorem \ref{thm:2.1} and  Theorem \ref{thm:2.2}, we obtain that the circumcenter of any $2$-simplex $ABC$ in a Minkowski plane can only lie in the interior or boundary of the shaded region in Figure \ref{fig:2} which is defined by the lines through edge midpoints, as established by other (longer) arguments in \cite[Theorem 4.1.]{ams2012:medcacinpp1}. In fact, in \cite{ams2012:medcacinpp1} the stronger statement is proved that for any point $M$ in Figure \ref{fig:2} a norm exists which makes $M$ a circumcenter of the $2$-simplex $ABC$ if and only if $M$ lies in the shaded region. It is not hard to see why the converse in this statement is true; the convex hull of triangle $ABC$ and its image $A'B'C'$ with respect to a half-turn around a point $M$ located in the shaded region is a suitable unit ball (the antipodal pairs $A$, $A'$, $B$, $B'$, and $C$, $C'$ are contained in its boundary). 

\begin{figure}
\begin{center}
\includegraphics[width=.5\textwidth]{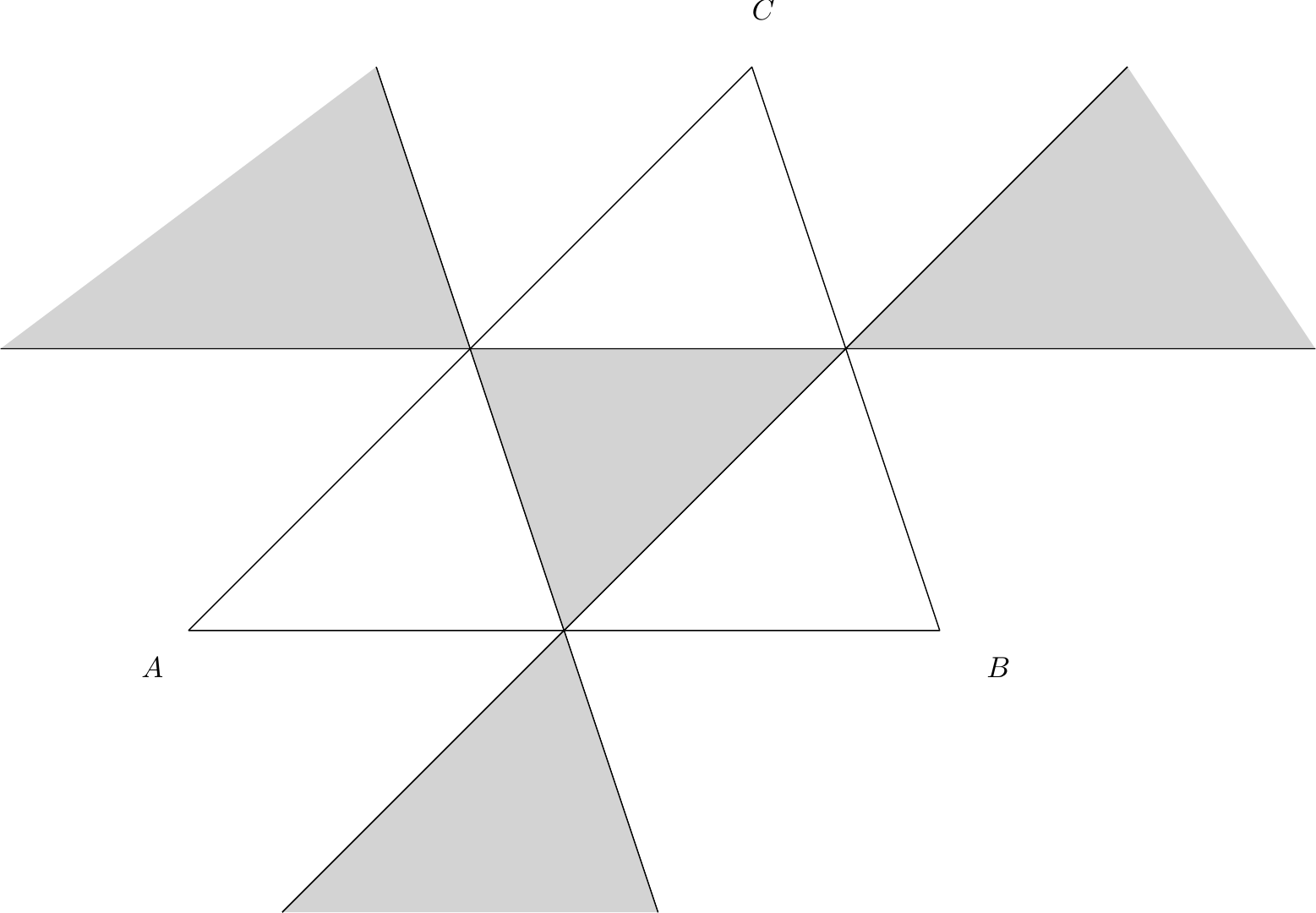}
\end{center}
\caption{Circumcenters of the triangle $ABC$ can only be located in the shaded regions determined by the edge midpoints.}
\label{fig:2}
\end{figure}

The \emph{medial polytope} of a $d$-simplex is its complete truncation (the truncation of the simplex up to the midpoints of edges). Some of its facets lie in the truncating medial hyperplanes, whereas the remaining facets lie in the hyperplanes supporting the original simplex for $d\geq 3$ (these facets are truncated facets of the original simplex). The following Corollary is an immediate consequence of Theorem \ref{thm:2.1}.

\begin{cor}\label{cor:2.1}
Let $T$ be a Minkowskian $d$-simplex ($d\geq 2$). If a circumcenter $M$ of $T$ lies within $T$, then $M$ lies within the medial polytope of $T$. 
\end{cor}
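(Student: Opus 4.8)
The plan is to apply Theorem \ref{thm:2.1} to each facet of $T$ separately and to exploit that the medial polytope is an intersection of half-spaces. Concretely, for each vertex $A$ of $T$ with opposite facet $F$ and medial hyperplane $E$, let $H_A^-$ denote the closed half-space bounded by $E$ that contains $F$ (the side \emph{not} containing $A$). Truncating the simplex at the vertex $A$ up to the edge midpoints means keeping $T\cap H_A^-$, so the medial polytope is exactly $T\cap\bigcap_A H_A^-$, the intersection being taken over all $d+1$ vertices; note that for $d\geq 3$ the extra factor $T$ is genuinely needed, since the truncating hyperplanes alone do not bound the polytope. Since $M\in T$ by hypothesis, it suffices to prove $M\in H_A^-$ for every vertex $A$.

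Next I would restate Theorem \ref{thm:2.1} in contrapositive form: for the circumcenter $M$, the point $M$ lies in $H_A^-$ (the closed facet-side of $E$) if and only if $M\in\mathrm{cone}((\mathrm{aff}(F)\cap B),A)$. Thus the corollary reduces to the purely geometric inclusion $T\subseteq\mathrm{cone}((\mathrm{aff}(F)\cap B),A)$ for every facet $F$: once this is established, $M\in T$ immediately gives $M\in\mathrm{cone}((\mathrm{aff}(F)\cap B),A)$ and hence $M\in H_A^-$, and running this over all facets places $M$ in $T\cap\bigcap_A H_A^-$, i.e.\ in the medial polytope.

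The inclusion itself is the technical heart, but it is short. Because every vertex of $F$ is a vertex of $T$, it lies on the circumsphere $S=\partial B$ and hence in $B$; as $\mathrm{aff}(F)\cap B$ is convex and contains all vertices of $F$, it contains their convex hull, so $F\subseteq\mathrm{aff}(F)\cap B$, whence $\mathrm{cone}(F,A)\subseteq\mathrm{cone}((\mathrm{aff}(F)\cap B),A)$. Finally, writing a point of $T=\mathrm{conv}(\{A\}\cup F)$ as $A+\lambda(f-A)$ with $f\in F$ and $\lambda\in[0,1]$ shows $T\subseteq\mathrm{cone}(F,A)$; combining the two inclusions yields $T\subseteq\mathrm{cone}((\mathrm{aff}(F)\cap B),A)$, as required.

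I expect the main obstacle to be bookkeeping rather than any deep difficulty, together with one genuine point of care. The genuine point is that Theorem \ref{thm:2.1} is a statement about the circumcenter specifically (its proof uses $\|[AM]\|$ equal to the circumradius), so the equivalence may be invoked only for $M$, not for arbitrary points of $T$; this is exactly the hypothesis we have. The bookkeeping point is to keep the open/closed distinction straight: Theorem \ref{thm:2.1} is phrased with an \emph{open} half-space on the $A$-side, so its negation delivers the \emph{closed} facet-side half-space $H_A^-$, which is precisely what lets the argument absorb the boundary case where $M$ lands on a medial hyperplane (equivalently, where $\mathrm{aff}(F)\cap B$ fails to be strictly convex, as in Theorem \ref{thm:2.2}). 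Everything else — the convexity of $\mathrm{aff}(F)\cap B$ and the two cone containments — is routine.
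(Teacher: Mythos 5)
Your proof is correct and follows exactly the route the paper intends: the paper states Corollary \ref{cor:2.1} as an immediate consequence of Theorem \ref{thm:2.1}, and your argument --- establishing $T\subseteq\mathrm{cone}((\mathrm{aff}(F)\cap B),A)$ for each facet and then applying the contrapositive of the theorem's equivalence facet by facet --- is precisely the filled-in version of that deduction. Your bookkeeping is also sound, both on the open/closed half-space distinction and on the point that for $d\geq 3$ the medial polytope is $T\cap\bigcap_A H_A^-$ rather than the intersection of the truncating half-spaces alone.
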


The point
\[G:=\frac{\sum_{i=0}^d A_i}{d+1}\]
is the \emph{centroid} of a $d$-simplex $T$ with vertices $A_0,\ldots,A_{d}$. It is the point of intersection of the \emph{medians} $[A_iA'_i]$, where \[A'_i:= \frac{\sum_{\substack{j=0\\j\neq i}}^d A_j}{d}\] is the centroid of the facet opposite $A_i$. The well-known fact that $\|[A_iG]\|\colon\|[GA'_i]\|=d:1$ for any $i=0,\ldots,d$ proves that $G$ lies in the relative interior of the medial polytope for $d\geq 1$ (for $d=0$, everything collapses to just one point, so the statement is still true but trivial). 

For $2$-dimensional simplices, the following is a consequence of \cite[Theorem 4.2. (b)]{ams2012:medcacinpp1}. We also show an independent proof.

\begin{thm}\label{thm:2.3}
In dimension $d=2$, if a triangle $T$ possesses a circumcenter $M$ in the relative interior of its medial polygon (triangle), then $M$ is the unique circumcenter of~$T$. In particular, a triangle $T$ whose centroid $G$ is a circumcenter possesses no other circumcenter besides~$G$. 
\end{thm}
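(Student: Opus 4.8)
The plan is to work in barycentric coordinates. Write $M=\lambda_A A+\lambda_B B+\lambda_C C$ with $\lambda_A+\lambda_B+\lambda_C=1$; since the hypothesis forces $M\in T$, all $\lambda_V\ge 0$. The midpoints that are the vertices of the medial polygon have coordinates $(\tfrac12,\tfrac12,0)$, $(0,\tfrac12,\tfrac12)$, $(\tfrac12,0,\tfrac12)$, so the relative interior of the medial polygon is exactly $\{\lambda_A<\tfrac12,\ \lambda_B<\tfrac12,\ \lambda_C<\tfrac12\}$. Hence it suffices to prove the contrapositive in the sharp form: \emph{if $T$ admits a second circumcenter $M'\neq M$ (say with radius $r'$, while $B=B(M,r)$), then $\lambda_V\ge\tfrac12$ for some vertex $V$.} This reduces uniqueness to a single scalar inequality.

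The geometric heart is the following claim, which I would isolate as a lemma: if $A,B,C$ lie on two distinct circumspheres $S(M,r)$ and $S(M',r')$, then two of the three vertices lie on a common line segment contained in $S(M,r)$. To see this I would examine the lens $B\cap B(M',r')$: the three vertices all lie on its boundary, which is assembled from one arc of $\partial B$ and one arc of $\partial B(M',r')$. Because $B$ and $B(M',r')$ are positively homothetic copies of the unit ball, the difference of their support functions changes sign at most twice, so the two boundaries cross at most twice and $\partial B\cap\partial B(M',r')$ has at most two connected components. Three points in at most two components force two of them, say $A$ and $B$, to lie in one component; and a connected set of common boundary points of two homothetic convex bodies must be a straight segment. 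Thus $A$ and $B$ lie on a flat edge of the circumsphere $S(M,r)$.

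With this in hand, let $K$ be the unit ball and let $\ell$ be the linear functional supporting that edge, normalized so that $\max_{x\in K}\langle\ell,x\rangle=1$; by central symmetry of $K$ this also gives $\langle\ell,y\rangle\ge -\|y\|$ for every $y$. The edge of $B$ supported by $\ell$ is $\{x:\langle\ell,x\rangle=\langle\ell,M\rangle+r\}$, so $A,B$ on it means $\langle\ell,A-M\rangle=\langle\ell,B-M\rangle=r$. Applying $\ell$ to $M=\lambda_A A+\lambda_B B+\lambda_C C$ and using $\langle\ell,A\rangle=\langle\ell,B\rangle=\langle\ell,M\rangle+r$, one rearrangement gives $\lambda_C=\dfrac{r}{\,r-\langle\ell,C-M\rangle\,}$. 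Since $\|C-M\|=r$ yields $\langle\ell,C-M\rangle\ge -r$ (and $<r$, as otherwise $A,B,C$ would be collinear), we obtain $\lambda_C\ge\tfrac12$, contradicting $M$ being in the relative interior of the medial polygon. This proves uniqueness; and since the centroid $G$ has $\lambda_A=\lambda_B=\lambda_C=\tfrac13<\tfrac12$, it lies in that relative interior, so the final assertion is immediate. (This computation is really a quantitative sharpening of Corollary \ref{cor:2.1}.)

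The step I expect to be the main obstacle is the geometric lemma: bounding the number of connected components of $\partial B\cap\partial B(M',r')$ for two homothets and certifying that a shared component is a genuine segment, together with a uniform treatment of its degenerate cases (equal radii $r=r'$ versus $r\neq r'$, and the homothety centre lying on or outside $B$). The barycentric inequality itself is robust; the real work lies in guaranteeing that a second circumcenter always manufactures a flat chord of $S(M,r)$ through two of the vertices.
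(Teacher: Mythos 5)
Your closing computation is correct, and it is a genuinely different finish from the paper's: once two vertices, say $A$ and $B$, lie on a segment of $S(M,r)$, so that a functional $\ell$ normalized by $\max_{x\in K}\langle\ell,x\rangle=1$ satisfies $\langle\ell,A-M\rangle=\langle\ell,B-M\rangle=r$, your identity $\lambda_C=r/(r-\langle\ell,C-M\rangle)$ together with $\langle\ell,C-M\rangle\geq -\|C-M\|=-r$ does force $\lambda_C\geq\tfrac12$, contradicting your (correct) description of the relative interior of the medial triangle as $\{\lambda_A,\lambda_B,\lambda_C<\tfrac12\}$. The paper finishes instead with a metric argument: the side $F\subset S(M_1,r_1)$ and the circumcenter $M_1$ lie on the same side of the medial line between $\mathrm{aff}(F)$ and the opposite vertex $A$, whence $\|[AM_1]\|>r_1$. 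Either finish works once a whole side of $T$ is known to lie on the circle; the real question is how that flat side is produced.

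That is where your proposal has a genuine gap, exactly where you suspected. Two intermediate claims in your lemma fail. First, the inference from ``the difference of support functions changes sign at most twice'' to ``the two boundaries cross at most twice, hence at most two components'' is not a formal consequence: the sign changes of $h_B-h_{B'}$ live on the circle of directions, the set $\partial B\cap\partial B'$ lives in the plane, and bridging the two is precisely the nontrivial content of the structure theorem you are trying to prove --- especially in the non-strictly convex, non-transversal situations that are the only relevant ones here. Second, the claim that a connected set of common boundary points of two homothets must be a straight segment is false when the ratios differ: in the norm with unit ball $[-1,1]^2$, take $B=B((1,1),1)=[0,2]^2$ and $B'=B((0,0),2)=[-2,2]^2$; then $\partial B\cap\partial B'=(\{2\}\times[0,2])\cup([0,2]\times\{2\})$ is connected but is an ``L'', not a segment (the homothety center $(2,2)$ is itself a common boundary point). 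What is true, and suffices for your pigeonhole, is that the intersection of two distinct homothetic circles is a union of at most two possibly degenerate segments; but that is the known result the paper alludes to (\cite{ams2012:medcacinpp1}, cf.\ Corollary 3.1 there, and Section 7.1 of \cite{msw2001:tgomsas1}), not something your lens argument establishes. The paper sidesteps this theorem altogether: it inscribes the homothetic image $T'$ of $T$ (under the homothety carrying one circumball to the other) in the same circle $S(M_1,r_1)$, observes that each pair of distinct corresponding vertices spans a supporting line of $\mathrm{conv}(T\cup T')$ passing through the homothety center $X$, and that only the two extremal lines of the cone over $T$ with apex $X$ can be such lines; pigeonhole then places at least three vertices, hence a side of $T$, on a line supporting the circle. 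To complete your proof you must either quote the two-circle structure theorem from the literature or replace the lens argument by such a homothety argument; your barycentric computation then closes the proof cleanly.
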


\begin{proof}
Let $T$ be a triangle in a Minkowski plane. Assume that a point $M_1$ in the relative interior of the medial triangle of $T$ is a circumcenter of $T$, and assume that there exists a circumcenter $M_2$ distinct from $M_1$. Denote the corresponding (positive) radii of the circumcircles by $r_1$ and $r_2$. Then $T$ and $T'=M_1+\frac{r_1}{r_2}(T-M_1)$ are homothets with homothety center $X$ on the line $\langle M_2M_1\rangle$ such that $\|[XM_2]\|\colon\|[XM_1]\|=r_2\colon r_1$ if $r_1\neq r_2$, and with homothety center $\infty$ if $r_1=r_2$. Furthermore, the vertices of $T$ and $T'$ lie on the circle $S(M_1,r_1)$.

The vertices of $T$ and the vertices of $T'$ all lie on the boundary of the convex set $\mathrm{conv}(T\cup T')$ and, since $T\neq T'$, at most two of these vertices coincide.  Every vertex of $T$ and its corresponding (by homothety) vertex of $T'$ define, if they are distinct, a supporting line of  $\mathrm{conv}(T\cup T')$. All so defined supporting lines also run through $X$. 

There are precisely two extremal lines of the infinite cone over $T$ with apex $X$, respectively of the degenerate cone (strip) for $X=\infty$. They are the only supporting lines which contain a point of the (degenerate) cone distinct from $X$. 
Since there now are at least three vertices 
on one of these supporting lines, say $\ell$, and those vertices must also lie on the sphere  $S(M_1,r_1)$, we conclude that $\ell$ is a supporting line of $S(M_1,r_1)$ containing at least one side $F$ of $T$.

Now, since $M_1$ lies in the relative interior of the medial triangle of $T$, $M_1$ and $F$ lie in the same open half-plane with respect to the medial hyperplane $E$ between $\ell$ and the opposite vertex $A$ of $T$, and $M_1$ lies in the cone over $F$ with apex $A$. Then $|[AM_1]|>r_1$. Thus
$M_1$ cannot be a circumcenter, which shows that our initial assumption (there exists a circumcenter besides $M_1$) was false. The second statement follows immediately from the fact that $G$ lies inside the medial triangle of $T$. 
\end{proof}

We have noted, on the one hand, that simplices may not have any circumcenters, but only if the unit ball is not smooth. On the other hand, we point out that, for $d>2$, there are strictly convex (smooth or nonsmooth) norms for which certain simplices may have more than one circumcenter. This is due to the fact that the intersection of two spheres with different centers (and possibly different radii) need not be contained in a hyperplane. 

\begin{example}\label{ex:2.1}
Consider the $3$-simplices $ABCD$ and $A'B'C'D'$ in an auxiliary Euclidean $3$-space, as in Figure~\ref{fig:3}. They are homothetic with respect to point $X$, with ratio
\[r:=\frac{\|XA\|}{\|XA'\|}=\frac{\|XB\|}{\|XB'\|}=\frac{\|XC\|}{\|XC'\|}=\frac{\|XD\|}{\|XD'\|}>1.\]
Suppose the segments $[AA']$, $[BB']$, $[CC']$, $[DD']$ lie on exposed rays of \[\mathrm{cone}(ABCD,X):=\{X+\lambda(\mathrm{conv}\{A, B, C, D\}-X),\lambda\geq 0\}\] and contain no other points of the simplices besides the endpoints. Fix the origin~$o$ on an interior ray of the above cone, such that $-\mathrm{conv}\{A, B, C, D\}$ (and, therefore, $-\mathrm{conv}\{A', B', C', D'\}$) lies in the interior of the cone; this is always possible for $o$ ``sufficiently far away'' from $X$. It is then possible to circumscribe a smooth, centrally symmetric (with respect to $o$) and strictly convex body $K$ around both simplices. Moreover, the homothet $X+r(K-X)$ of $K$ is also circumscribed around simplex $ABCD$, with center $X+r(o-X)$. Thus, simplex $ABCD$ has at least two circumcenters in the smooth, strictly convex norm induced by taking $K$ as the unit ball. 
\end{example}

\begin{figure}
\begin{center}
\includegraphics[width=.8\textwidth]{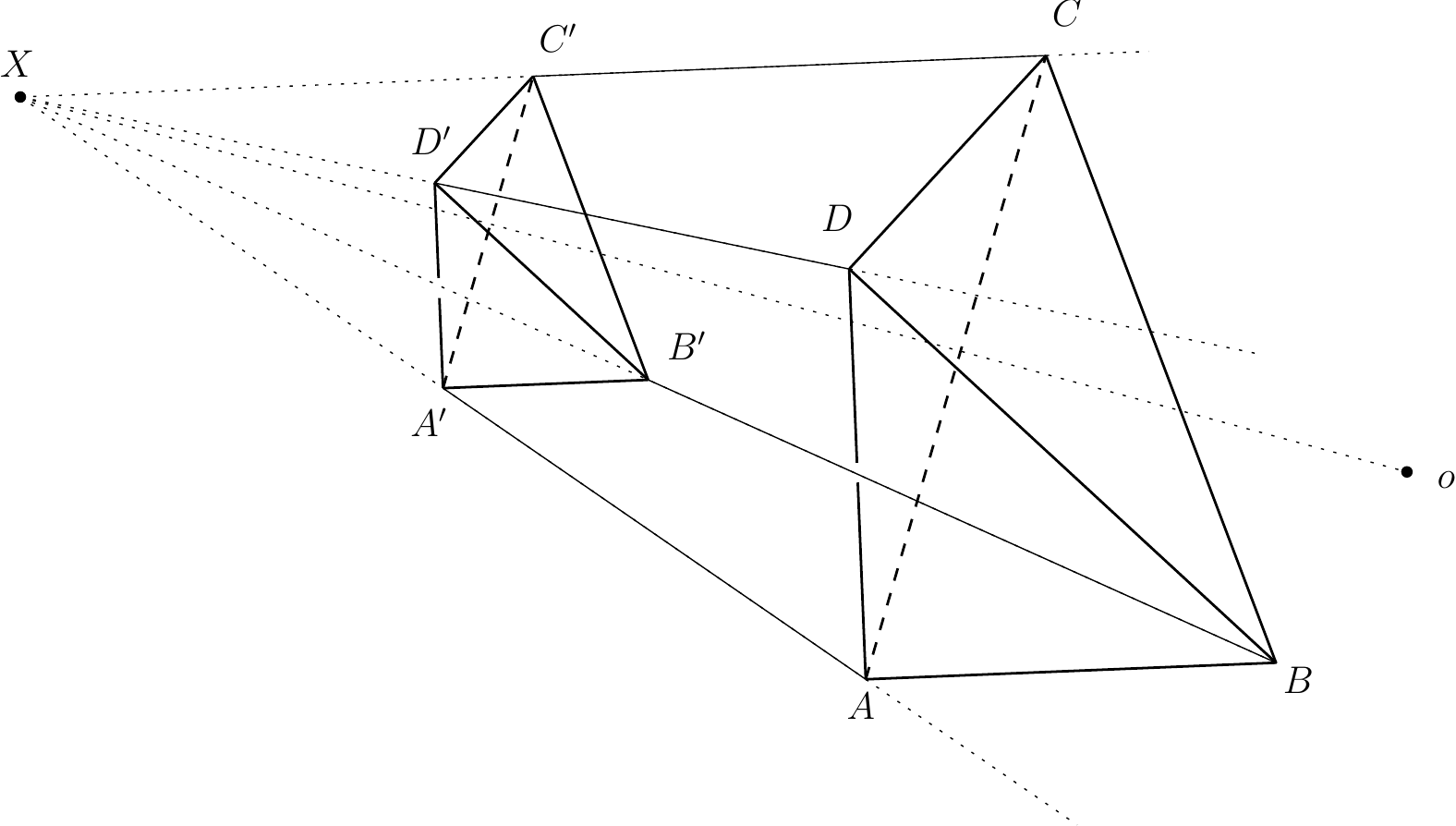}
\end{center}
\caption{Simplex $ABCD$ possesses several circumcenters in a smooth, strictly convex norm, since homothetic simplices $ABCD$ and $A'B'C'D'$ can be simultaneously inscribed in a smooth, strictly convex body which is centrally symmetric with respect to $o$.}
\label{fig:3}
\end{figure}

Examples can also be constructed if the distance functional is more general, i.e., for \emph{gauges} whose unit balls need not be centered at the origin. For strictly convex norms in $d=2$, however, strict convexity prevents several circumcenters; the unit circle necessarily contains a segment if a triangle possesses more than one circumcircle, see the discussion and references presented in \cite[Section 7.1.]{msw2001:tgomsas1}, and also \cite[Corollary 3.1.]{ams2012:medcacinpp1}. Note, moreover, that Theorem \ref{thm:2.3} does not generalize to arbitrary dimensions $d>2$, as shown by the next example.

\begin{example}\label{ex:2.2}
Define a norm by using a parallelepiped $Q$ centered at the origin of an auxiliary Euclidean $3$-space as the unit ball. We construct a simplex $\mathrm{conv}\{A,B,C,D\}$ with several circumcenters in this new norm, one of which is its centroid. Select point $A$ as the midpoint of an edge $e$ of $Q$. Let $E_A$ be the plane through $A$ and the midpoints of all other edges of $Q$ which are parallel to $e$. Select one of the faces incident at $e$ as the ``top'' of $Q$ and intersect the parallelepiped with a plane $E$ at $\frac{2}{3}$ of the distance between the ``top'' of $Q$ and its opposite face. Select $B$ as one of the two points in $E\cap E_A\cap \partial Q$, such that $B$ and $A$ lie in the same face of $Q$. Place the two remaining vertices $C$, $D$ in the relative interior of the face opposite $B$ and in $E\cap \partial Q$, such that $E_A$ bisects the line segment connecting them. Then the simplex generated as the convex hull of the four vertices $A$, $B$, $C$, $D$ possesses several circumballs (e.g., $Q$ and small translations of $Q$ parallel to $CD$). Moreover, the origin $o$ is not only a circumcenter of $\mathrm{conv}\{A,B,C,D\}$, but also the centroid of this simplex. See Figure \ref{fig:4}. 
\end{example}

\begin{figure}
\begin{center}
\includegraphics[width=.5\textwidth]{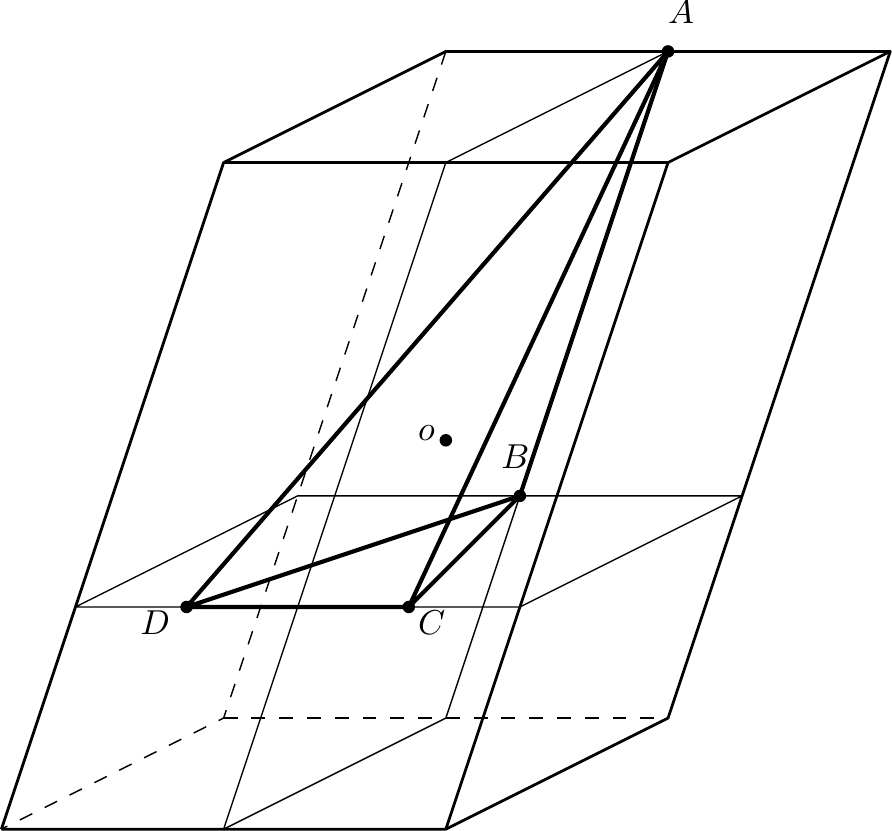}
\end{center}
\caption{Simplex $ABCD$ has several circumcenters, one of which is its centroid (the origin $o$).}
\label{fig:4}
\end{figure}

%%%%%%%%%%%%%%%%%%%%%%%%%%%%%%%%%%%%%%%%%%%%%%%%%%%%%%%%%%%%%%%%%%%%%%%%%%%%%
\section{Quasiregular simplices}\label{sec:quasiregular}
%%%%%%%%%%%%%%%%%%%%%%%%%%%%%%%%%%%%%%%%%%%%%%%%%%%%%%%%%%%%%%%%%%%%%%%%%%%%%

Simplices where the centroid is a circumcenter are special in that \emph{all medians of such a simplex have equal length}. In dimension $d=2$ (see Theorem \ref{thm:2.3}) such a circumcenter is necessarily unique, and in the Euclidean plane, coincidence of \emph{the} circumcenter and centroid implies regularity. This warrants a definition.

\begin{definition}\label{def:3.1}
Let $T$ be a $d$-simplex in Minkowskian $d$-space possessing a circumcenter. We say $T$ is \emph{$AG$-quasiregular} if its centroid is a circumcenter.
\end{definition}

\begin{rem}\label{rem:3.1}
$AG$-quasiregularity for $d=2$ is the same as the notion of $AG$-regularity for triangles in strictly convex normed planes, defined by Martini and Spirova in \cite{ms2011:rtinp} and meaning \emph{regular in the sense of Asplund and Gr\"unbaum}, see \cite{ag1960:otgomp}. In higher dimensional Euclidean space, however, the coincidence of centroid and circumcenter does not by itself imply regularity (see \cite[Theorem 3.2.]{ehm2005:coscarfs}). More precisely, it implies that the simplex belongs to the larger family of $d$-simplices with the following property: for all their facets the sum of squares of all their ${d \choose 2}$ edge-lengths is equal. This is why, also in the more general Minkowskian case, we speak of quasiregularity, as a Minkowskian analogue of a weaker kind of regularity.
\end{rem}

In \cite{lm2017:mpelafsims} the \emph{Euler line} and the \emph{Feuerbach-} or \emph{$2(d+1)$-sphere} associated to a circumcenter $M$ of a $d$-simplex $T$ have been studied. The Euler line is the straight line containing $M$, the centroid $G$, the \emph{associated Feuerbach center $F_M$} (the center of the aforementioned sphere), the \emph{associated Monge point $N_M$}, and and the so-called \emph{associated complementary point $P_M$}. We recall from \cite{lm2017:mpelafsims} that the point $N_M$ is the point of concurrence of certain lines, namely those which are well-defined by the centroid of a $(d-2)$-face (or ridge) of $T$ and the midpoint of the opposite edge. Furthermore, the point $P_M$ is the point of concurrence of certain other lines, each containing a vertex of $T$ and parallel to a line defined by the centroid of the opposite facet and $M$, provided the latter two points do not coincide.
Then, the Feuerbach center $F_M$ divides $[MP_M]$ internally in the ratio $1:(d-1)$. It is the center of a sphere analogous to the well-known Feuerbach circle of a triangle in the Euclidean plane. 
For $AG$-quasiregular simplices the following holds.

\begin{prop}\label{prop:3.1}
In an $AG$-quasiregular $d$-simplex in Minkowskian $d$-space, the centroid coincides with a circumcenter and the associated Monge point, Feuerbach center, and complementary point, i.e., the associated Euler line collapses to a single point. In particular, the centroid is both a circumcenter and the center of a sphere through the centroids of the facets, the \emph{Feuerbach-sphere}. $AG$-quasiregular $d$-simplices are the only simplices which have the property that one associated Euler line collapses.
\end{prop}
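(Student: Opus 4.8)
The plan is to fix the circumcenter to be the centroid and then show that each of the remaining four named points of the associated Euler line is forced onto it. By the hypothesis of $AG$-quasiregularity (Definition~\ref{def:3.1}) the centroid $G$ is a circumcenter, so I take $M=G$ and write $\overrightarrow{XY}=Y-X$. Since all five points $M,G,N_M,P_M,F_M$ are collinear on the Euler line by \cite{lm2017:mpelafsims}, it suffices to locate $N_M$, $P_M$, and $F_M$ individually. The structural principle I would exploit is that each of these three derived points arises from a construction in which the circumcenter enters affinely, so that each has the form $M+c\,\overrightarrow{MG}$ with $c$ a constant depending only on $d$; hence the condition $M=G$ (i.e.\ $\overrightarrow{MG}$ vanishing) collapses all of them onto $M=G$.

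First I would pin down the complementary point $P_M$ directly from its definition. Writing $A_i'=\tfrac{1}{d}\sum_{j\neq i}A_j=\tfrac{(d+1)G-A_i}{d}$ for the centroid of the facet opposite $A_i$, the line through $A_i$ parallel to $\langle A_i'M\rangle$ has direction $\overrightarrow{MA_i'}$; imposing the concurrency condition $P=A_i+t_i\,\overrightarrow{MA_i'}$ simultaneously for all $i$ is a short affine computation that yields $\overrightarrow{MP_M}=(d+1)\,\overrightarrow{MG}$, independently of $i$, and this is valid in any norm. Note that $A_i'\neq G$ for every $i$, so the defining lines are genuinely present when $M=G$, and then $P_M=G$. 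The Feuerbach center follows for free: since $F_M$ divides $[MP_M]$ internally in the ratio $1:(d-1)$, we have $\overrightarrow{MF_M}=\tfrac{1}{d}\overrightarrow{MP_M}=\tfrac{d+1}{d}\overrightarrow{MG}$, whence $F_M=G$ as well.

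The remaining, and most delicate, point is the Monge point $N_M$, whose defining concurrence is the one most sensitive to the norm and cannot be read off as a purely affine average of the vertices. Here I would invoke the Euler-line relation of \cite{lm2017:mpelafsims}, which places $N_M$ at a fixed affine ratio along the line from $M$ toward $G$ (of the form $\overrightarrow{MN_M}=\tfrac{d+1}{d-1}\,\overrightarrow{MG}$); with $M=G$ this forces $N_M=G$, completing the collapse of the Euler line to the single point $G$. I expect this to be the main obstacle precisely because, unlike for $P_M$, one must lean on the already-established collinearity and ratio rather than re-deriving them from scratch.

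Finally, to identify the collapsed point as the center of the Feuerbach sphere through the facet centroids, I would compute $\overrightarrow{GA_i'}=A_i'-G=-\tfrac{1}{d}(A_i-G)$, so that $\|A_i'-G\|=\tfrac{1}{d}\|A_i-G\|=\tfrac{r}{d}$ for every $i$, where $r$ is the circumradius with respect to the circumcenter $M=G$; thus the facet centroids all lie on $S(G,r/d)$, the sphere centered at $F_M=G$. The converse assertion is then immediate: if some associated Euler line collapses to a single point, then $M$ and $G$, both of which lie on it, coincide, so $G$ is a circumcenter and $T$ is $AG$-quasiregular.
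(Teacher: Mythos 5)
Your proof is correct and takes essentially the approach the paper itself relies on: the paper states Proposition \ref{prop:3.1} without a separate proof, treating it as immediate from the facts recalled from \cite{lm2017:mpelafsims} that $G$, $N_M$, $F_M$, $P_M$ all sit at fixed affine ratios along the segment from $M$ toward $G$, so that $M=G$ collapses the whole Euler line, while the converse follows since $M$ and $G$ both lie on that line. Your only additions are cosmetic --- re-deriving the ratio for $P_M$ (and hence $F_M$) by the affine concurrency computation and checking directly that the facet centroids lie on $S(G,r/d)$ --- which correctly fills in exactly what the paper leaves to the cited reference.
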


Recall that an $AG$-quasiregular simplex in a $d$-dimensional Minkowski space (for $d\geq 3$) may have other circumcenters besides the centroid (see Example \ref{ex:2.2}), and each such other circumcenter defines an Euler line together with the centroid. Note, however, that there also exist simplices with a unique circumcenter which are \emph{not} $AG$-quasiregular, i.e., where the unique circumcenter does not coincide with the centroid; for example, this is true for all simplices except the regular ones in the Euclidean norm. 

It is natural to ask whether $AG$-quasiregular simplices exist often in any dimension $d\geq 2$. Given a Minkowskian $(d-1)$-sphere $S(o,1)$ (w.l.o.g. a unit sphere centered at $o$) and a point $P_0$ on such a sphere, does there exist an $AG$-quasiregular simplex inscribed into that sphere with  vertex $P_0$? Martini and Spirova \cite{ms2011:rtinp} examined this question in strictly convex normed planes and proved that there is a unique such simplex (then called $AG$-regular triangle). We now demonstrate that the answer in higher dimensions is also positive, although not unique.

\begin{thm}\label{thm:3.1}
Let $P_0$ be a point on the Minkowskian $(d-1)$-sphere $S(o,1)$ in an arbitrary Minkowski space ($d\geq 2$). Then there exists an $AG$-quasiregular $d$-simplex $T$ with $P_0$ as one of its vertices and $S(o,1)$ as circumsphere.
\end{thm}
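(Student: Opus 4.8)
The plan is to reduce the statement to a purely linear–algebraic existence problem and then to solve that problem by a sumset computation followed by an extremal argument. Since the prescribed circumsphere is $S(o,1)$, centred at $o$, a simplex inscribed in $S(o,1)$ is $AG$-quasiregular exactly when its centroid is $o$. Writing the vertices as $P_0,w_1,\dots,w_d$, the centroid equals $o$ if and only if $\sum_{i=1}^d w_i=-P_0$, and one checks, using $P_0=-\sum_i w_i$ together with the matrix determinant lemma (the correction matrix $I+\mathbf 1\mathbf 1^{\top}$ has determinant $d+1\neq0$), that the $d+1$ points are affinely independent precisely when $w_1,\dots,w_d$ are linearly independent. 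Thus it suffices to produce linearly independent points $w_1,\dots,w_d\in S:=S(o,1)$ with $w_1+\dots+w_d=-P_0$; the invertible linear map carrying a Euclidean regular simplex (vertices summing to $o$) to $\{P_0,w_1,\dots,w_d\}$ then realises the desired $T$, with centroid $o$, circumsphere $S(o,1)$, and general position automatic.

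First I would settle existence of \emph{some} representation $\sum_{i} w_i=-P_0$ with $w_i\in S$ by computing the $d$-fold sumset $\Sigma_d:=\{w_1+\dots+w_d:w_i\in S\}$. The inclusion $\Sigma_d\subseteq dB$ is the triangle inequality. For the reverse inclusion I would first show $\Sigma_2=2B$: given a target $\rho e$ with $e\in S$ and $0\le\rho\le2$, move a point $\gamma(t)$ along a path in the connected set $S$ from $e$ to $-e$ and apply the intermediate value theorem to $t\mapsto\|\rho e-\gamma(t)\|$, whose endpoint values $|\rho-1|\le1$ and $\rho+1\ge1$ straddle $1$; the crossing yields $w_1=\gamma(t)$ and $w_2=\rho e-w_1\in S$. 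Then $S\oplus rB=(r+1)B$ for every $r\ge1$ by the explicit choice $w=y/\|y\|$, $z=y-w$ for $y\neq o$ (and an antipodal choice for $y=o$), so inductively $\Sigma_d=dB$. Since $\|{-P_0}\|=1\le d$, a representation exists.

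It remains to upgrade this to a \emph{linearly independent} representation, and I would argue by extremality. The set $\mathcal R$ of representations is a nonempty compact subset of $S^d$, so the continuous function $(w_1,\dots,w_d)\mapsto|\det[w_1\ \cdots\ w_d]|$ attains a maximum on $\mathcal R$ at some tuple $W^{\ast}$, which I claim is nondegenerate. If not, the $w_i$ span a proper subspace; choosing a pair $w_1,w_2$ with $s_{12}:=w_1+w_2\neq o$ (possible as the total sum is $-P_0\neq o$) and holding $s_{12}$ fixed, the admissible replacements form $S\cap(s_{12}+S)$, a set which for $s_{12}\neq o$ is not contained in the span of $W^{\ast}$. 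Replacing $(w_1,w_2)$ by such a pair leaving the span preserves the constraint $\sum_i w_i=-P_0$ while enlarging the span to all of $\mathbb R^d$, strictly increasing the determinant and contradicting maximality. For $d=2$ nondegeneracy is in fact automatic, since a dependent pair with $w_1+w_2=-P_0\neq o$ would force $w_1=w_2=-P_0/2\notin S$.

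The main obstacle is this last step for norms that are neither smooth nor strictly convex: when $\partial B$ contains flat pieces, the claim that $S\cap(s_{12}+S)$ escapes a given hyperplane is no longer visible from the radical-hyperplane heuristic, and the extremal tuple may span a subspace of codimension greater than one. I expect to handle this by allowing the swap to range over several index pairs and, if necessary, perturbing $B$ to a nearby strictly convex body, solving there, and passing to the limit while bounding the determinant away from zero. Ensuring that this limit yields a genuinely nondegenerate configuration, rather than one that collapses in the limit, is the delicate point; the reduction and the sumset computation are otherwise routine.
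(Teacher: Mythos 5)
Your reduction and your sumset computation are both correct: the centroid condition is $\sum_i w_i=-P_0$, affine independence of the $d+1$ vertices is equivalent to linear independence of $w_1,\dots,w_d$ by the determinant identity, and $\Sigma_d=dB$ follows from your intermediate-value and normalization arguments. In particular, since independence is automatic in the plane, your proof is complete for $d=2$. The genuine gap is the independence upgrade for $d\geq 3$, exactly where you flag it, and it is not a removable technicality in the form proposed. If the maximizer $W^{\ast}$ of $|\det|$ were degenerate, the maximum would be $0$, so \emph{every} representation would be degenerate; your task is then to produce a nondegenerate tuple from a degenerate one, and the single-pair swap cannot be guaranteed to do this. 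Concretely, for $d=3$ consider $w_1=w_2=-P_0$, $w_3=P_0$: this lies in $\mathcal{R}$, its span is one-dimensional, the pairs $(1,3)$ and $(2,3)$ have sum $o$, and the pair $(1,2)$ has sum $-2P_0$, whose swap set $S\cap(-2P_0+S)$ reduces (for a strictly convex norm, by the equality case of the triangle inequality) to the single point $\{-P_0\}$; no admissible move exists at all. Moreover, even when a swap $w_1'\notin V:=\mathrm{span}(W^{\ast})$ is available, the new determinant equals $\pm\det[w_1',\,s_{12},\,w_3,\dots,w_d]$, which is still $0$ unless $s_{12},w_3,\dots,w_d$ happen to be independent --- false whenever $\dim V\leq d-2$, and possibly false even in codimension one (e.g.\ if $w_3=w_4$). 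Finally, the claim that $S\cap(s_{12}+S)$ escapes the linear hyperplane $V$ is never proved (for non-strictly-convex norms this set can be a flat piece), and the perturbation fallback is circular: passing to a limit of strictly convex approximations requires a lower bound on $|\det|$ uniform in the approximation, which is precisely the nondegeneracy being sought.

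The paper closes this gap by abandoning the global extremal problem in favor of a recursive construction in which independence is built in by fiat. Having chosen the vertex $P$ on a chord $[PQ]$ of the current ball $B$ through the current centroid $G$, with $P$ the endpoint satisfying $\|[PG]\|\leq\|[QG]\|$, the centroid of the remaining $l$ vertices is forced to be $G'=G+\frac{G-P}{l}$, which by that choice of endpoint lies in the relative interior of $B$; the remaining vertices are then confined to an affine hyperplane section $B\cap E$ with $G'\in E$ and $G\notin E$, so each chosen vertex automatically lies outside the affine hull of all later ones, and the recursion terminates at $l=2$ with an intermediate-value argument (a chord bisected by $G'$) of the same kind as your $\Sigma_2=2B$ step. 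Your sumset lemma is, in effect, an unstructured version of this descent; what your proposal is missing is precisely the dimension-by-dimension confinement that makes independence provable for every norm.
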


\begin{proof}
The proof is by recursive construction of an example. At each step of the recursion, we have a convex body $B$ of dimension $l$ with a (relative) interior point $G$, and $G$ is the prescribed centroid for (as yet to be determined) $l+1$ points on the boundary $\partial B$. Unless otherwise stated, we operate relative to the affine hull of $B$, which has dimension $l$, and use the induced norm (in fact, we only use division ratios of collinear points). The construction of a $d$-simplex consists of two phases, starting with $G:=o$ and $B:=B(o,1)$:
\begin{paragraph}{Phase 1, $l>2$.}
Select a chord of $B$ passing through $G$. In the case that $l=d$, we select the chord containing $P_0$. Label its two endpoints $P$ and $Q$, such that $\|[PG]\|\leq\|[QG]\|$, and select $P$ as a vertex of the simplex (for $l=d$, select $P:=P_0$ to be the prescribed vertex). Determine $G':=G+\frac{G-P}{l}$ which lies strictly between $G$ and $Q$ and therefore in the (relative) interior of $B$. Select an affine $(l-1)$-plane $E$ through $G'$ but not through $G$ (and therefore not through $P$, $Q$) and designate it to be the affine hull of the remaining $l$ vertices ($E$, by construction, contains none of the previously selected vertices; this is to ensure general position). Then set $l:=l-1$, $B:=B\cap E$, $G:=G'$ and proceed to the next step of the recursion.
\end{paragraph}
\begin{paragraph}{Phase 2, $l=2$.}
Select a chord of $B$ passing through $G$, such that it is not divided by $G$ in the ratio $1:2$ (in the case that $l=d=2$, we select the chord containing $P_0$). This is always possible for reasons of continuity. Label its two endpoints $P$ and $Q$, such that $\|[PG]\|\leq\|[QG]\|$, and select $P$ as a vertex (for $l=d=2$, select $P:=P_0$). Determine $G':=G+\frac{G-P}{2}$. For the remaining two vertices, select another chord of $B$, this time through $G'$, such that $G'$ is its midpoint (which is again possible for reasons of continuity). Observe that the chord must be distinct from $[PQ]$, as $[PQ]$ was chosen such that $G'$ would not be its midpoint. Select the endpoints $R$, $S$ of the second chord as the remaining vertices.
\end{paragraph}

All $d+1$ chosen vertices, by construction, lie on the $(d-1)$-sphere $S(o,1)$ and are in general position because they are affinely independent. Moreover, their centroid by construction is $o$. Thus, we have constructed an $AG$-quasiregular $d$-simplex with prescribed vertex $P_0$ and $o$ as centroid and circumcenter.
\end{proof}

\begin{rem}\label{rem:3.2}
Theorem \ref{thm:3.1} prescribes only one vertex arbitrarily. In general, we cannot hope to prescribe more than half of the vertices. For example, note that we cannot prescribe a $(d-2)$-face for dimension $d\geq 4$; let $G_{3}$ be the centroid of three points on a Minkowskian unit $3$-sphere in $4$-space, then $\|G_{3}\|\leq 1$, but it may very well be that $\|-\frac{3}{2}G_{3}\|> 1$.\end{rem}

\begin{rem}\label{rem:3.3}
The case $d=2$ leaves out Phase $1$ completely and, for a strictly convex norm, determines the unique $AG$-regular triangle with one prescribed vertex, as in \cite{ms2011:rtinp} (the uniqueness follows from strict convexity). For $d\geq 3$, uniqueness of the construction in the case of a strictly convex norm (or even a strictly convex, smooth norm) fails even when we endeavor to prescribe half of the vertices. E.g., for $d=3$, select $2$ vertices on the unit sphere in Euclidean $3$-space, and determine the centroid $G'_2$ of the missing $2$ vertices. There is now much freedom in the selection of these vertices, as all antipodes (diametrally opposite points) on the circle $S(o,1)\cap E$  in the plane $E$ through $G'_2$ and perpendicular to $\langle oG'_2\rangle$ are suitable, except those where all four vertices become coplanar (and the simplex collapses).
\end{rem}

%%%%%%%%%%%%%%%%%%%%%%%%%%%%%%%%%%%%%%%%%%%%%%%%%%%%%%
\section{Duality}\label{sec:duality}
%%%%%%%%%%%%%%%%%%%%%%%%%%%%%%%%%%%%%%%%%%%%%%%%%%%%%%

In \cite{a2003:otgosims} Averkov examined further properties of simplices in Minkowski spaces. We are particularly interested in extensions of the statements regarding duality which, interestingly, build a connection with the notion of \emph{insphere} and \emph{incenter}. For this purpose, consider the Minkowskian $d$-space $\mathbb{M}^d:=(\mathbb{R}^d,\|\cdot\|)$ with unit ball $B$, and the dual space $(\mathbb{M}^d)^*$ of linear functionals on $\mathbb{M}^d$. The \emph{dual norm} on $(\mathbb{M}^d)^*$ is the support function on $B$, i.e.,
\[\|f\|^*=\mathrm{sup} \{f(x)\colon x\in B\}\] with \emph{dual unit ball}
\[B^*=\{f\in (\mathbb{M}^d)^*\colon \|f\|^*\leq 1\}.\] Using an auxiliary Euclidean metric induced by an inner product $\langle\cdot,\cdot\rangle$, $\mathbb{M}^d$ and $(\mathbb{M}^d)^*$ can be identified, with $B^*$ as the \emph{polar reciprocal} of $B$ with respect to the inner product. That is,
\[B^*=\{y\in\mathbb{M}^d \colon \langle x,y\rangle\leq 1 \text{ for all } x \in B\}.\] We may then also denote a normed space with a certain unit ball as $\mathbb{M}^d(B)$ and the dual (identified) space (with the dual norm) as $\mathbb{M}^d(B^*)$. Recall that the unit ball for $\mathbb{M}^d(B^{**})$ is a homothet of the unit ball for $\mathbb{M}^d(B)$.
The \emph{dual body $K^*_P$} of a convex body $K$ with respect to a point $P$ is simply its polar reciprocal with respect to the auxiliary Euclidean structure, and with $P$ taking the role of the origin.

Recall that the \emph{Minkowskian distance} of a point $P$ to a hyperplane $H$ is the radius of a smallest Minkowskian ball centered at $P$ such that $H$ is tangent to this ball. Analogously, the \emph{Minkowskian height} of a $d$-simplex $T$ with respect to a vertex $A$ is the distance of $A$ to the supporting hyperplane containing the opposite facet. A \emph{quasi-medial hyperplane} of a $d$-simplex is a hyperplane containing a $(d-2)$-face or ridge, as well as the midpoint of the opposite edge, thereby containing a quasi-median. Note that Averkov \cite{a2003:otgosims} uses different terminology, namely the term medial hyperplane, which we have reserved for hyperplanes bisecting every segment between a vertex and a point in the opposite facet. 

In the previous Sections, we have focused on the circumsphere of a $d$-simplex, which may not always exist. However, every Minkowskian simplex possesses an insphere. In the plane, the center of the incircle is found at the intersection of \emph{Glogovski{\u\i}'s angular bisectors} \cite{g1970:botmpwn}. In higher dimensions, the analogous notion, called \emph{bisector} by Averkov in \cite{a2003:otgosims}, is that of a hyperplane such that each point on the hyperplane has equal Minkowskian distance to two given, intersecting hyperplanes. For each pair of adjacent facets of a simplex, this gives rise to an \emph{interior bisector} (containing interior points of the simplex) and an \emph{exterior bisector} (not containing interior points of the simplex). The intersection of all Minkowskian interior bisectors of a simplex is the \emph{incenter} of the simplex, i.e., the center of the insphere. In \cite{a2003:otgosims} it is proved that there are also up to $d+1$ \emph{exspheres} or \emph{exballs} or \emph{escribed balls} associated to each $d$-simplex, whose centers are found as the intersections of $k$ interior bisectors and $d-k$ exterior bisectors, and whose radii are called \emph{exradii}.

Among the theorems in \cite{a2003:otgosims} we find extended versions of the following two theorems (we recall only statements of interest for our current article). Note that some partial (Minkowskian) statements or Euclidean versions appeared earlier in \cite{cg1985:tfpims, m2000:swef}. 

\begin{thm}\cite[Theorem 6, abbreviated]{a2003:otgosims}\label{thm:4.1}
Let $T$ be a $d$-simplex in a Minkowski space $\mathbb{M}^d(B)$, and $T^*_G$ be the $d$-simplex which is the dual body of $T$ with respect to the centroid $G$ of $T$. Then the following conditions are equivalent.
\begin{enumerate}
\item The Minkowskian heights of $T$ are equal in $\mathbb{M}^d(B)$. 
\item The class of quasi-medial hyperplanes of $T$ coincides with the class of Minkowskian bisectors of $T$ in $\mathbb{M}^d(B)$.
\item The centroid of $T$ coincides with the Minkowskian incenter of $T$ in $\mathbb{M}^d(B)$. 
\item The Minkowskian exradii of $T$ are all equal in $\mathbb{M}^d(B)$.
\item The medians of $T^*_G$ have the same length in $\mathbb{M}^d(B^*)$, where $B^*$ is the dual body to $B$.
\item The centroid of $T^*_G$ coincides with the Minkowskian circumcenter of $T^*_G$ with respect to $\mathbb{M}^d(B^*)$.
\end{enumerate}
\end{thm}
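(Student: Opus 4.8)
The plan is to make condition (1), equality of the Minkowskian heights $h_0,\dots,h_d$, into a hub, and to show that each of the other five conditions is equivalent to it; this is cleaner than a cyclic chain, because the four ``primal'' conditions are all secretly statements about the numbers $h_i$. The one computational tool I would set up first is a Minkowskian barycentric identity. Write $A_0,\dots,A_d$ for the vertices, $F_i$ for the facet opposite $A_i$, and $H_i=\mathrm{aff}(F_i)=\{x\colon\langle a_i,x\rangle=c_i\}$ with $a_i$ an outer normal; let $\delta_i(X)=(c_i-\langle a_i,X\rangle)/\|a_i\|^*$ be the signed Minkowskian distance of $X$ to $H_i$ (positive on the interior side). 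Using the classical signed sub-simplex volume decomposition together with the conversion $\mathrm{vol}(F_i)=dV\,\|a_i\|_2/(h_i\|a_i\|^*)$ between Euclidean and Minkowskian data, I would derive
\[
\sum_{i=0}^d \frac{\delta_i(X)}{h_i}=1 \qquad\text{for every } X,
\]
and, from the elementary fact that the centroid sits at height $h_i/(d+1)$ over each facet, the relation $\delta_i(G)=h_i/(d+1)$, i.e.\ $d_B(G,H_i)=h_i/(d+1)$.

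With these in hand the three primal equivalences are short. For (1)$\Leftrightarrow$(3): the incenter is the unique interior point equidistant from all $H_i$, and since $d_B(G,H_i)=h_i/(d+1)$, the centroid is equidistant from every facet exactly when the $h_i$ agree. For (1)$\Leftrightarrow$(2): the quasi-medial hyperplane through the ridge $F_i\cap F_j$ and the midpoint $m_{ij}=\tfrac12(A_i+A_j)$ of the opposite edge coincides with the bisector of $F_i,F_j$ iff $m_{ij}$ is equidistant from $H_i$ and $H_j$; using $\langle a_i,A_j\rangle=c_i$ one computes $\delta_i(m_{ij})=h_i/2$, so the condition reads $h_i=h_j$. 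For (1)$\Leftrightarrow$(4): inserting the center of the exsphere opposite $A_i$ (characterized by $\delta_i=-\rho_i$ and $\delta_j=+\rho_i$ for $j\neq i$) into the identity gives $1/\rho_i=\sum_j 1/h_j-2/h_i$, whence equal exradii is equivalent to equal heights.

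For the dual conditions I would place $G$ at the origin and compute $T^*_G$ explicitly: the facet hyperplane $H_i$ polarizes to the vertex $v_i=a_i/c_i$, and a one-line computation gives $\|v_i\|^*=\|a_i\|^*/c_i=(d+1)/h_i$. Two further facts close the loop. First, the affine identity $\sum_i\mathrm{vol}(F_i)\,n_i=0$ (outer unit normals weighted by facet volumes) yields $\sum_i v_i=0$, so $G$ is \emph{also} the centroid of $T^*_G$. Second, for any simplex the median issuing from a vertex has length $\tfrac{d+1}{d}$ times the distance from that vertex to the centroid, so equal median lengths is equivalent to the centroid being equidistant from all vertices, i.e.\ to the centroid being a circumcenter; this is exactly (5)$\Leftrightarrow$(6). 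Combining, $G$ is a circumcenter of $T^*_G$ iff the $\|v_i\|^*$ coincide iff the $h_i$ coincide, and since $G$ is the centroid of $T^*_G$ this is precisely condition (6); hence (1)$\Leftrightarrow$(5)$\Leftrightarrow$(6).

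The main obstacle is the dual half. The primal equivalences reduce to bookkeeping once the barycentric identity is available, but the dual statements require the correct duality dictionary $\|v_i\|^*=(d+1)/h_i$ and, above all, the observation that polarizing about the centroid again produces a simplex whose centroid is $G$; this last point rests on $\sum_i\mathrm{vol}(F_i)\,n_i=0$ and is easy to overlook. I would also take care at the exsphere step: the formula $1/\rho_i=\sum_j 1/h_j-2/h_i$ presupposes that the exsphere opposite $A_i$ exists (positivity of its right-hand side), so I would either read (4) as referring to those exspheres that exist, or note that equal heights force all $d+1$ exradii to be positive and equal for $d\geq 2$, leaving the equivalence intact.
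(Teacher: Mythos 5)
The first thing to say is that there is no proof in this paper to compare against: Theorem \ref{thm:4.1} is quoted, in abbreviated form, from Averkov \cite{a2003:otgosims} (his Theorem 6), and the only remark the paper makes about its proof is that it ``uses the fact that the centroids of $T$ and $T^*_G$ coincide,'' with pointers to \cite[Section 3]{a2003:otgosims} and \cite[Section 2]{m2000:swef}. So your proposal can only be judged on its own merits and against that one flagged ingredient.

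Judged that way, it holds up, and it appears to reconstruct essentially the intended argument. The hub identity $\sum_{i=0}^d \delta_i(X)/h_i=1$ is legitimate: ratios of distances to a fixed hyperplane are the same in every norm, so the identity is inherited from the Euclidean signed-volume decomposition; together with $\delta_i(G)=h_i/(d+1)$, $\delta_i(m_{ij})=h_i/2$, and $1/\rho_i=\sum_j 1/h_j-2/h_i$, the primal equivalences come out exactly as you say. On the dual side, the dictionary $v_i=a_i/c_i$ with $\|v_i\|^*=(d+1)/h_i$ is correct, the reduction of (5)$\Leftrightarrow$(6) to the $d:1$ division of each median by the centroid is correct, and your derivation of $\sum_i v_i=0$ from Minkowski's relation $\sum_i \mathrm{vol}(F_i)\,n_i=0$ is precisely the centroid-coincidence fact that the paper singles out as the crux of Averkov's proof (for a simplex the centroid is the vertex average, so that identity is all one needs). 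Two places deserve tightening in a full write-up. First, in (1)$\Leftrightarrow$(2), coincidence of the two \emph{classes} of hyperplanes does not a priori mean the quasi-medial hyperplane of the pair $(i,j)$ equals the bisector of the \emph{same} pair; you should add the easy observation that cross-matching is impossible, since a hyperplane containing the ridges of two distinct pairs contains either all $d+1$ vertices (impossible) or all but one, forcing it to be a facet hyperplane $H_i$, which a quasi-medial hyperplane of the pair $(i,j)$ never is because it contains $m_{ij}\notin H_i$. Second, the exsphere existence caveat you raise is real and not pedantic: in a general Minkowski space $\sum_{j\neq i}1/h_j-1/h_i$ can vanish (the triangle inequality for the dual norm applied to Minkowski's relation need not be strict), which is exactly why the paper speaks of ``up to $d+1$ exspheres''; your resolution --- read (4) over existing exspheres, and note that equal heights force $1/\rho_i=(d-1)/h>0$ for $d\geq 2$ --- is the right one.
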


The proof of this theorem uses the fact that the centroids of $T$ and $T^*_G$ coincide, which is proved in \cite[Section 3]{a2003:otgosims} and also in \cite[Section 2]{m2000:swef}. A further specialization of Theorem \ref{thm:4.1} is possible in the planar situation ($d=2$). Recall that
for the unit ball $B$ in the Minkowski plane, the solution to the isoperimetric problem, called \emph{isoperimetrix}, is
the body $\tilde{B}$ obtained from $B$ via rotation of the dual $B^*$ by the angle $\pi/2$ (w.r.t. the auxiliary inner product), see \cite{b1947:tipitmp} and \cite[Chapter 4]{t1996:mg}. Both bodies are homothetic if and only if the plane is a so-called \emph{Radon plane}, cf. \cite{ms2006:aarc}. Radon planes are precisely those planes where Birkhoff orthogonality is symmetric. In general, the norm induced by the isoperimetrix of a unit ball (itself a convex body symmetric to the origin) is called \emph{antinorm}, see \cite{ms2006:aarc}. A convex body is called \emph{reduced} with respect to a given norm of the ambient space if any properly contained convex body has smaller minimum width (for reduced bodies in Minkowski spaces we refer to the survey \cite{lm2014:rcbifdnsas}). 
\begin{thm}\cite[Theorem 9]{a2003:otgosims}\label{thm:4.2}
Let $\mathbb{M}^2(B)$ be an arbitrary Minkowski plane, and $T$ be a triangle in $\mathbb{M}^2(B)$. Then the following conditions are equivalent:
\begin{enumerate}
\item $T$ is reduced in $\mathbb{M}^2(B)$.
\item $T$ has equal Minkowskian heights in $\mathbb{M}^2(B)$.
\item $T$ is equilateral in $\mathbb{M}^2(\tilde{B})$, where $\tilde{B}$ is the isoperimetrix of $B$, i.e., the dual rotated by an angle of $\pi / 2$ about the origin.
\end{enumerate}
\end{thm}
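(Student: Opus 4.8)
The plan is to prove the three equivalences by first isolating the purely metric equivalence (2)$\Leftrightarrow$(3), which is a direct computation with the height, and then attacking the reducedness equivalence (1)$\Leftrightarrow$(2), which carries the real geometric content. First I would record the basic formula for a Minkowskian height: if $H$ is a line with Euclidean unit normal $n$, then the Minkowskian distance of a point $P$ to $H$ is the Euclidean distance divided by $\|n\|^{*}=h_{B}(n)$, since the ball $P+rB$ reaches $H$ exactly when $r\,h_{B}(n)$ equals the Euclidean distance. Writing $e_{i}$ for the side of $T$ opposite the vertex $A_{i}$ and $\rho$ for the rotation by $\pi/2$, the Euclidean normal of $e_{i}$ is $\rho(e_{i})$, and because $B^{*}$ (hence $\tilde B=\rho(B^{*})$) is centrally symmetric one has $\|\rho(e_{i})\|^{*}=\|e_{i}\|_{\tilde B}$ (the sign of the rotation is irrelevant, as $\rho^{-1}=\rho\circ(-\mathrm{id})$ and $-B^{*}=B^{*}$). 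Combining this with $d_{E}(A_{i},\mathrm{aff}(e_{i}))\cdot|e_{i}|_{E}=2\,\mathrm{Area}_{E}(T)$ gives
\[
h_{A_{i}}=\frac{2\,\mathrm{Area}_{E}(T)}{\|e_{i}\|_{\tilde B}}.
\]
As the numerator is common to all three vertices, the heights are equal iff the antinorm lengths $\|e_{i}\|_{\tilde B}$ are equal, which is precisely (2)$\Leftrightarrow$(3).

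Next I would pin down the minimal width $w(T)$. For a direction $n$ in which neither supporting line of $T$ contains an edge, both supporting lines pass through vertices, so the Euclidean width is $\langle e,n\rangle$ for a fixed edge vector $e$, and the Minkowskian width is $\langle e,n\rangle/h_{B}(n)$. Being a quotient of a linear function by the convex function $h_{B}$, this is quasiconcave in $n$, so on each arc between consecutive edge-normal directions its minimum is attained at an endpoint, i.e.\ at an edge normal, where the Minkowskian width is exactly the corresponding height. Hence $w(T)=\min_{i}h_{A_{i}}=2\,\mathrm{Area}_{E}(T)/\max_{i}\|e_{i}\|_{\tilde B}$, governed by the longest side in the antinorm.

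With the width formula in hand, (1)$\Leftrightarrow$(2) splits into two implications. For $\neg(2)\Rightarrow\neg(1)$, suppose some height, say $h_{A_{0}}$, strictly exceeds $w(T)$. Then the edge normal opposite $A_{0}$ is not a minimal-width direction, and I would cut $T$ by a line parallel to $e_{0}$, trimming a sliver near $A_{0}$ by a small positive amount: the resulting trapezoid introduces no new edge-normal direction, its two unaffected edge normals (those opposite $A_{1}$ and $A_{2}$) still give width $w(T)$, and the trimmed direction stays at least $w(T)$, so by quasiconcavity the trapezoid is a proper subset of the same minimal width, and $T$ is not reduced. For (2)$\Rightarrow$(1), assume $h_{A_{0}}=h_{A_{1}}=h_{A_{2}}=w(T)$, so that all three edge normals realise the thickness, and invoke the standard characterization of reduced planar bodies (see the survey \cite{lm2014:rcbifdnsas}): reducedness holds iff through every boundary point there passes a minimal-width chord that is Birkhoff orthogonal to parallel supporting lines at its endpoints. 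I would verify that the equal-height condition supplies such a chord for every boundary point, with the three vertices treated via the cones of supporting lines available there.

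The hard part will be the forward direction (2)$\Rightarrow$(1): one must check that equality of all three heights genuinely forces a minimal-width chord through \emph{every} boundary point, and here the interplay of Birkhoff orthogonality with the local geometry at the vertices (where many supporting lines are admissible) must be handled with care, since the naive width chord emanating perpendicular to an edge reaches the opposite supporting line only at the foot of the corresponding height. By contrast, the width formula $w(T)=\min_{i}h_{A_{i}}$ and the trimming construction in the converse direction are routine once the quasiconcavity of the directional width is established.
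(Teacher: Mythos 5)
The paper itself contains no proof of Theorem \ref{thm:4.2}: it is quoted from Averkov \cite{a2003:otgosims} (Theorem 9 there), so your attempt can only be judged on its own merits, not against an internal argument. Most of it is sound. The height formula $h_{A_i}=2\,\mathrm{Area}_E(T)/\|e_i\|_{\tilde B}$ is correct (including your observation that $\rho^{-1}=\rho\circ(-\mathrm{id})$ and the central symmetry of $B^*$ make the sign of the rotation irrelevant), and it gives (2)$\Leftrightarrow$(3) at once. The identity $w(T)=\min_i h_{A_i}$, proved by quasiconcavity of $n\mapsto\langle e,n\rangle/h_B(n)$ on the arcs between edge-normal directions, is a valid argument, and the trimming construction for $\neg(2)\Rightarrow\neg(1)$ works as you describe.

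The genuine gap is in (2)$\Rightarrow$(1). The ``standard characterization'' you invoke --- reduced if and only if through every boundary point there passes a minimal-width chord Birkhoff orthogonal to parallel supporting lines at its endpoints --- is false in the direction you need it: results of this type (e.g.\ Lassak's theorem that through every extreme point of a reduced body passes a thickness chord) are \emph{necessary} conditions, not sufficient ones. The Euclidean square is a counterexample: through every boundary point there is a horizontal or vertical chord realizing the minimal width, perpendicular to the supporting lines at its endpoints, yet the square is not reduced, since cutting off a small corner leaves the minimal width unchanged. So ``verifying the chord condition'' for $T$ would prove nothing. Fortunately, the implication you flag as the hard part is actually the easy one and needs no external lemma: let $K\subsetneq T$ be a convex body. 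Then $K$ misses some vertex $A_i$, since otherwise $K\supseteq\mathrm{conv}\{A_0,A_1,A_2\}=T$. The supporting line of $T$ parallel to $\mathrm{aff}(e_i)$ on the opposite side meets $T$ only in the single point $A_i$; as $K$ is compact, convex, contained in $T$, and avoids $A_i$, the width of $K$ in the direction normal to $e_i$ is strictly smaller than $h_{A_i}=w(T)$, whence $w(K)<w(T)$ and $T$ is reduced. With this replacement your proof closes; note that your assessment of difficulty is inverted --- the real (if routine) work lies in the trimming direction, not here.
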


For more results concerning reduced triangles in Minkowski planes, especially their metric properties, see also \cite{ams2013:ortinp}.
We deduce the following statements from Averkov's Theorems, basically starting with the dual and making use of the term $AG$-quasiregularity.

\begin{thm}\label{thm:4.3}
Let $T$ be a $d$-dimensional simplex in a Minkowski space $\mathbb{M}^d(B)$, and $T^*_G$ be the $d$-simplex which is the dual body of $T$ with respect to the centroid $G$ of $T$. Then the following conditions are equivalent.
\begin{enumerate}
\item $T$ is $AG$-quasiregular in $\mathbb{M}^d(B)$.
\item The medians of $T$ have the same length in $\mathbb{M}^d(B)$. 
\item The Minkowskian heights of $T^*_G$ are equal in $\mathbb{M}^d(B^*)$, where $B^*$ is the dual body to $B$.
\item The class of quasi-medial hyperplanes of $T^*_G$ coincides with the class of Minkowskian bisectors of $T^*_G$ in $\mathbb{M}^d(B^*)$.
\item The centroid of $T^*_G$ coincides with the Minkowskian incenter of $T^*_G$ in $\mathbb{M}^d(B^*)$.
\item The Minkowskian exradii of $T^*_G$ are all equal in $\mathbb{M}^d(B^*)$.
\end{enumerate}
\end{thm}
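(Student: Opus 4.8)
The plan is to deduce Theorem~\ref{thm:4.3} from Averkov's Theorem~\ref{thm:4.1} by applying the latter not to $T$ but to the \emph{dual} simplex, i.e.\ to $U:=T^*_G$ regarded as a $d$-simplex in the Minkowski space $\mathbb{M}^d(B^*)$. Under this substitution the roles of ``primal'' and ``dual'' are interchanged, and the six conditions of Theorem~\ref{thm:4.1}, read off for $U$, should match the six conditions of Theorem~\ref{thm:4.3} one by one.

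Two duality facts make the substitution work. First, as recalled after Theorem~\ref{thm:4.1} (and proved in \cite[Section~3]{a2003:otgosims} and \cite[Section~2]{m2000:swef}), the centroid of $U=T^*_G$ coincides with the centroid $G$ of $T$; hence the dual of $U$ taken with respect to \emph{its own} centroid is $U^*_G=(T^*_G)^*_G$. Second, since $G$ lies in the interior of $T$, polar reciprocation about $G$ is involutive, so $(T^*_G)^*_G=T$ (and this simplex again has centroid $G$); thus the ``dual of the dual'' required by Theorem~\ref{thm:4.1} is once more $T$. The ambient space of this second dual is $\mathbb{M}^d(B^{**})$, whose unit ball is a homothet of $B$ and therefore induces a norm proportional to that of $\mathbb{M}^d(B)$. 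Consequently every property we shall extract from $\mathbb{M}^d(B^{**})$---division ratios, equality of lengths, and coincidence of centroid and circumcenter---is invariant under this rescaling.

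With these identifications the matching is immediate. Averkov's conditions~(1)--(4) of Theorem~\ref{thm:4.1}, stated for $U=T^*_G$ in $\mathbb{M}^d(B^*)$, are verbatim conditions~(3)--(6) of Theorem~\ref{thm:4.3} (equal Minkowskian heights; quasi-medial hyperplanes coinciding with bisectors; centroid coinciding with incenter; equal exradii---all for $T^*_G$ in $\mathbb{M}^d(B^*)$). Averkov's condition~(5), stated for the dual of $U$, asserts that the medians of $(T^*_G)^*_G=T$ have equal length in $\mathbb{M}^d(B^{**})$; by the homothety relating $B^{**}$ to $B$ this is condition~(2) of Theorem~\ref{thm:4.3}. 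Averkov's condition~(6), likewise for $(T^*_G)^*_G=T$, asserts that the centroid of $T$ is a circumcenter of $T$ in $\mathbb{M}^d(B^{**})$, hence in $\mathbb{M}^d(B)$; by Definition~\ref{def:3.1} this is precisely the $AG$-quasiregularity of $T$, namely condition~(1). Since Theorem~\ref{thm:4.1} declares its six conditions mutually equivalent, the six conditions of Theorem~\ref{thm:4.3} are mutually equivalent as well.

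The only delicate part is the bookkeeping of the duality rather than any analytic estimate: one must verify precisely that $T^*_G$ has centroid $G$, that $(T^*_G)^*_G=T$ as an identity of simplices, and that the two scale-sensitive statements~(2) and~(1)---equal median lengths and the coincidence of centroid and circumcenter---genuinely transfer from $\mathbb{M}^d(B^{**})$ to $\mathbb{M}^d(B)$ across the homothety relating $B^{**}$ and $B$. Once these routine points are confirmed, Theorem~\ref{thm:4.3} is nothing more than Theorem~\ref{thm:4.1} read for the dual simplex.
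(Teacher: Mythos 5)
Your proposal is correct and follows exactly the route the paper intends: the paper gives no explicit proof of Theorem~\ref{thm:4.3}, saying only that it is deduced from Averkov's Theorem~\ref{thm:4.1} ``basically starting with the dual,'' which is precisely your substitution $U:=T^*_G$ in $\mathbb{M}^d(B^*)$ together with the coincidence of centroids, the involutivity $(T^*_G)^*_G=T$, and the homothety between $B^{**}$ and $B$. In effect you have supplied the bookkeeping details that the authors left implicit.
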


Specializing to the plane, we obtain another theorem extending Theorem \ref{thm:4.2}.

\begin{thm}\label{thm:4.4}
Let $\mathbb{M}^2(B)$ be an arbitrary Minkowski plane, and $T$ be a triangle in $\mathbb{M}^2(B)$. Let $T_m$ be a triangle whose sides are all translates of the medians of $T$, and let $\mathbb{M}^2(\tilde{B})$ be the norm with the isoperimetrix of $B$ as the unit disk (also known as \emph{antinorm}). Then the following statements are equivalent.\\
\begin{enumerate}
\item $T$ is $AG$-quasiregular in $\mathbb{M}^2(B)$.
\item $T_m$ is equilateral in $\mathbb{M}^2(B)$. 
\item $T_m$ is reduced in $\mathbb{M}^2(\tilde{B})$.
\item $T_m$ has equal Minkowskian heights in $\mathbb{M}^2(\tilde{B})$.
\end{enumerate}
Furthermore, the following are equivalent.
\begin{enumerate}
\item $T$ is equilateral in $\mathbb{M}^2(B)$.
\item $T_m$ is $AG$-quasiregular in $\mathbb{M}^2(B)$.
\item $T$ is reduced in $\mathbb{M}^2(\tilde{B})$.
\item $T$ has equal Minkowskian heights in $\mathbb{M}^2(\tilde{B})$.
\end{enumerate}
\end{thm}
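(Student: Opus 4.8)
The plan is to reduce both chains to Averkov's Theorem~\ref{thm:4.2}, applied not in $\mathbb{M}^2(B)$ but in the \emph{antinorm} plane $\mathbb{M}^2(\tilde B)$, together with the characterization of $AG$-quasiregularity via equal medians supplied by Theorem~\ref{thm:4.3}. Everything then rests on two elementary ``reciprocity'' facts, one about unit balls and one about triangles.

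First I would record that the isoperimetrix operation is involutive up to homothety, i.e.\ the isoperimetrix of $\tilde B$ is a homothet of $B$. Writing $R$ for the rotation by $\pi/2$ about $o$ (an isometry of the auxiliary inner product), we have $\tilde B = R(B^*)$; since rotations commute with polarity, $(R(K))^*=R(K^*)$, and polarity is involutive on origin-symmetric bodies, $B^{**}=B$, it follows that $\tilde{\tilde B}=R\big((R(B^*))^*\big)=R^2(B^{**})=R^2(B)=-B=B$, using that $B$ is centrally symmetric. (Even under a constant normalization of the isoperimetrix, $\tilde{\tilde B}$ is at least homothetic to $B$, which suffices, since ``equilateral'', ``reduced'' and ``equal heights'' are invariant under scaling the unit ball.) Second, I would show the median-triangle operation $T\mapsto T_m$ is involutive up to the factor $\tfrac34$: the medians of $T_m$ are translates of $\tfrac34$ of the sides of $T$. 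Writing $\vec m_0,\vec m_1,\vec m_2$ for the median vectors of $T=A_0A_1A_2$, one checks $\vec m_0+\vec m_1+\vec m_2=0$, so these are the directed sides of $T_m$, realized by vertices $o,\ \vec m_0,\ -\vec m_2$; a short computation gives the median vector $\tfrac12(\vec m_0-\vec m_2)=\tfrac34(A_2-A_0)$ from the first vertex, and cyclically for the others. In particular, the medians of $T_m$ are all equal if and only if $T$ is equilateral, while the sides of $T_m$ are all equal (by definition translates of the medians of $T$) if and only if the medians of $T$ are equal.

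With these facts the chains are immediate. For the first chain, $T$ is $AG$-quasiregular $\Leftrightarrow$ the medians of $T$ have equal length (Theorem~\ref{thm:4.3}, (1)$\Leftrightarrow$(2)) $\Leftrightarrow$ $T_m$ is equilateral in $\mathbb{M}^2(B)$, giving (1)$\Leftrightarrow$(2). Applying Theorem~\ref{thm:4.2} to the triangle $T_m$ \emph{in the plane} $\mathbb{M}^2(\tilde B)$, whose isoperimetrix is $\tilde{\tilde B}=B$, yields: $T_m$ reduced in $\mathbb{M}^2(\tilde B)$ $\Leftrightarrow$ $T_m$ has equal heights in $\mathbb{M}^2(\tilde B)$ $\Leftrightarrow$ $T_m$ equilateral in $\mathbb{M}^2(B)$, which is (2)$\Leftrightarrow$(3)$\Leftrightarrow$(4). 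For the second chain, applying Theorem~\ref{thm:4.2} to $T$ itself in $\mathbb{M}^2(\tilde B)$ gives (1)$\Leftrightarrow$(3)$\Leftrightarrow$(4); and $T_m$ is $AG$-quasiregular $\Leftrightarrow$ its medians are equal (Theorem~\ref{thm:4.3}) $\Leftrightarrow$ $T$ is equilateral, by the second reciprocity fact, which is (1)$\Leftrightarrow$(2).

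The only genuinely new ingredients are the two reciprocity statements, and I expect the main obstacle to lie in setting them up rigorously rather than in any deep argument: one must verify that the isoperimetrix is involutive up to homothety so that Theorem~\ref{thm:4.2} may legitimately be run ``backwards'' in the antinorm plane, and one must confirm that $T_m$ is nondegenerate (a genuine triangle) so that Theorem~\ref{thm:4.2} and Theorem~\ref{thm:4.3} apply to it. Both points are routine but should be stated explicitly, as the whole argument hinges on being able to treat $\mathbb{M}^2(\tilde B)$ as an ordinary Minkowski plane whose isoperimetrix returns $B$.
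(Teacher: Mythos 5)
Your proposal is correct and follows essentially the same route as the paper: the equivalence (1)$\Leftrightarrow$(2) in each chain via Theorem~\ref{thm:4.3} together with the median-triangle reciprocity, and the remaining equivalences by applying Theorem~\ref{thm:4.2} to $T_m$ or $T$ in the antinorm plane, using that the antinorm of the antinorm is the original norm. Incidentally, your computed factor $\tfrac{3}{4}$ for the homothety $(T_m)_m \sim T$ is the correct one (the paper states $\tfrac{2}{3}$, a harmless slip that does not affect the argument).
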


\begin{proof}
Clearly, with respect to $\mathbb{M}^2(B)$, $T_m$ is equilateral if and only if the medians of $T$ have the same length, which, by Theorem \ref{thm:4.3}, is equivalent to $T$ being $AG$-quasiregular. However, it is also true that $T_m$ is $AG$-quasiregular if and only if $T$ is equilateral; $(T_m)_m$ is homothetic to $T$ with all side lengths scaled by $\frac{2}{3}$, and thus the medians of $T_m$ have equal length if and only if $T$ is equilateral. This proves the equivalence of the first two statements in each set.
We further recall that the antinorm of the antinorm is the original norm, so the remaining equivalences follow from applying Theorem \ref{thm:4.2} to either $T_m$ or $T$.
\end{proof}

Observe that \emph{other equivalences from Theorem \ref{thm:4.3} apply to the corresponding triangles}. As such, Theorems \ref{thm:4.3} and \ref{thm:4.4} together provide a link between side lengths, reducedness, special incenters, and $AG$-quasiregularity (special circumcenters), as well as excircles and bisectors for several closely related triangles with respect to several norms. For example, a triangle $T$ is $AG$-quasiregular in $\mathbb{M}^2(B)$ if and only if $T_m$ is equilateral in the same norm, and if and only if $T_m$ is reduced in the antinorm $\mathbb{M}^2(\tilde{B})$ with the centroid of $T_m$ being an incenter in the antinorm. Furthermore equivalent to this situation is the reducedness of $T^*_G$ in $\mathbb{M}^2(B^*)$, which is again equivalent to the centroid of $T^*_G$ being an incenter of $T^*_G$ in $\mathbb{M}^2(B^*)$, and both the equilaterality of $T^*_G$ and the $AG$-quasiregularity of $(T^*_G)_m$ in the norm induced by $B$ rotated by $\pi/2$ around the origin as the unit ball. We close with a remark on the situation in a Minkowski plane equipped with a \emph{Radon norm}.

\begin{rem}\label{rem:4.1}
In a Radon plane, a triangle $T$ is $AG$-quasiregular if and only if $T_m$ is equilateral or, equivalently, reduced, as well as if and only if the centroid of $T_m$ is an incenter. Equivalently, $T^*_G$ is reduced, equilateral, with the centroid being an incenter with respect to the dual of the Radon norm (which is also a Radon norm, and which is induced by the unit ball rotated by $\pi/2$ around the origin), and $(T^*_G)_m$ is $AG$-quasiregular in the dual of the Radon norm.
\end{rem}

%%%%%%%%%%%%%%%%%%%%%%%%%%%%%%
\section{Concluding remarks and open problems}\label{sec:remarks}
%%%%%%%%%%%%%%%%%%%%%%%%%%%%%%

We have seen that, compared to the plane, the situation for circumcenters is more complicated in dimension three and up. Further investigation is warranted, and the methodologically similar study of geometric properties of minimal enclosing balls and all their centers (=Chebyshev sets) seems to be another promising topic (see \cite{ams2012:medcacinpp2} for the two-dimensional situation and \cite{mms2014:csabo} for more general settings). Circumcenters and Chebyshev sets are of interest in adjacent disciplines such as location science and approximation theory. Further on, solutions to questions from Elementary Geometry in normed spaces very often yield a tool and form the first step for attacking problems in the spirit of Discrete and Computational Geometry in such spaces (see, e.g., \cite{ams2012:medcacinpp1,ams2012:medcacinpp2}). And of course it is an interesting task for geometers to generalize notions like orthocentricity (cf. \cite{ag1960:otgomp, ms2007:tfcaoinp, mw2009:oosiscnp}) and regularity of figures (see \cite{ms2011:rtinp}) in absence of an inner product. That is, which figures are special, and what are useful symmetry concepts? Nothing satisfactory is done in this direction, and it is clear that a corresponding hierarchical classification of types of simplices would yield the first step here. One possible question is whether $AG$-quasiregular simplices have further interesting properties in higher dimensions, such as those mentioned for Euclidean simplices in Remark \ref{rem:3.1}. Finally we mention that still for the Euclidean plane there are new generalizations of notions, such as generalized Euler lines in view of so-called circumcenters of mass etc. (see \cite{tt2015:rotcom}), which could also be studied for normed planes and spaces.

\begin{flushleft}
Undine Leopold\\
undine.leopold@gmail.com\\[2ex]

Horst Martini\\
Technische Universit\"at Chemnitz \\
Fakult\"at f\"ur Mathematik\\
D - 09107 Chemnitz\\
Germany\\
horst.martini@mathematik.tu-chemnitz.de
\end{flushleft}

\def\cprime{$'$} \def\cprime{$'$}


\begin{thebibliography}{10}

\bibitem{ab1988:oinlsas1}
J.~Alonso and C.~Ben{\'{\i}}tez.
\newblock Orthogonality in normed linear spaces: a survey. {I}.\ {M}ain
  properties.
\newblock {\em Extracta Math.}, 3(1):1--15, 1988.

\bibitem{ab1989:oinlsas2}
J.~Alonso and C.~Ben{\'{\i}}tez.
\newblock Orthogonality in normed linear spaces: a survey. {II}.\ {R}elations
  between main orthogonalities.
\newblock {\em Extracta Math.}, 4(3):121--131, 1989.

\bibitem{ams2012:medcacinpp1}
J.~Alonso, H.~Martini, and M.~Spirova.
\newblock Minimal enclosing discs, circumcircles, and circumcenters in normed
  planes ({P}art {I}).
\newblock {\em Comput. Geom.}, 45(5-6):258--274, 2012.

\bibitem{ams2012:medcacinpp2}
J.~Alonso, H.~Martini, and M.~Spirova.
\newblock Minimal enclosing discs, circumcircles, and circumcenters in normed
  planes ({P}art {II}).
\newblock {\em Comput. Geom.}, 45(7):350--369, 2012.

\bibitem{ams2013:ortinp}
J.~Alonso, H.~Martini, and M.~Spirova.
\newblock On reduced triangles in normed planes.
\newblock {\em Results Math.}, 64(3-4):269--288, 2013.

\bibitem{amw2012:oboaioinls}
J.~Alonso, H.~Martini, and S.~Wu.
\newblock On {B}irkhoff orthogonality and isosceles orthogonality in normed
  linear spaces.
\newblock {\em Aequationes Math.}, 83(1-2):153--189, 2012.

\bibitem{ag1960:otgomp}
E.~Asplund and B.~Gr{\"u}nbaum.
\newblock On the geometry of {M}inkowski planes.
\newblock {\em Enseignement Math. (2)}, 6:299--306 (1961), 1960.

\bibitem{a2003:otgosims}
G.~Averkov.
\newblock On the geometry of simplices in {M}inkowski spaces.
\newblock {\em Stud. Univ. \v Zilina Math. Ser.}, 16(1):1--14, 2003.

\bibitem{bhmt2017:ains}
V.~Balestro, \'A.~G. Horv\'ath, H.~Martini, and R.~Teixeira.
\newblock Angles in normed spaces.
\newblock {\em Aequationes Math.}, 91(2):201--236, 2017.

\bibitem{b1947:tipitmp}
H.~Busemann.
\newblock The isoperimetric problem in the {M}inkowski plane.
\newblock {\em Amer. J. Math.}, 69:863--871, 1947.

\bibitem{cg1985:tfpims}
G.~D. Chakerian and M.~A. Ghandehari.
\newblock The {F}ermat problem in {M}inkowski spaces.
\newblock {\em Geom. Dedicata}, 17(3):227--238, 1985.

\bibitem{ehm2005:coscarfs}
A.~L. Edmonds, M.~Hajja, and H.~Martini.
\newblock Coincidences of simplex centers and related facial structures.
\newblock {\em Beitr\"age Algebra Geom.}, 46(2):491--512, 2005.

\bibitem{g1970:botmpwn}
V.~V. Glogovs{\cprime}ki{\u\i}.
\newblock Bisectors on the {M}inkowski plane with norm {$(x^{p}+y^{p})^{1/p}$}.
\newblock {\em V\=\i snik L\cprime v\=\i v. Pol\=\i tehn. \=Inst.},
  (44):192--198, 218, 1970.

\bibitem{g1969:sioah}
M.~L. Gromov.
\newblock Simplexes inscribed on a hypersurface.
\newblock {\em Mat. Zametki}, 5:81--89, 1969.

\bibitem{lm2014:rcbifdnsas}
M.~Lassak and H.~Martini.
\newblock Reduced convex bodies in finite dimensional normed spaces: a survey.
\newblock {\em Results Math.}, 66(3-4):405--426, 2014.

\bibitem{lm2017:mpelafsims}
U.~Leopold and H.~Martini.
\newblock Monge points, {E}uler lines, and {F}euerbach spheres in {M}inkowski
  spaces.
\newblock In Marston~D.E. Conder, Antoine Deza, and Asia~Ivic Weiss, editors,
  {\em Discrete Geometry and Symmetry: In Honor of K\'aroly Bezdek's and Egon
  Schulte's 60th Birthdays}, Springer Proceedings in Mathematics \& Statistics.
  To appear.

\bibitem{m1987:tdoamispocg}
V.~V. Makeev.
\newblock The degree of a mapping in some problems of combinatorial geometry.
\newblock {\em Ukrain. Geom. Sb.}, (30):62--66, ii, 1987.

\bibitem{mms2014:csabo}
P.~Mart{\'{\i}}n, H.~Martini, and M.~Spirova.
\newblock Chebyshev sets and ball operators.
\newblock {\em J. Convex Anal.}, 21(3):601--618, 2014.

\bibitem{ms2007:tfcaoinp}
H.~Martini and M.~Spirova.
\newblock The {F}euerbach circle and orthocentricity in normed planes.
\newblock {\em Enseign. Math. (2)}, 53(3-4):237--258, 2007.

\bibitem{ms2011:rtinp}
H.~Martini and M.~Spirova.
\newblock Regular tessellations in normed planes.
\newblock {\em Symmetry: Culture and Science}, (1-2):223--235, 2011.

\bibitem{ms2004:tgomsas2}
H.~Martini and K.~J. Swanepoel.
\newblock The geometry of {M}inkowski spaces---a survey. {II}.
\newblock {\em Expo. Math.}, 22(2):93--144, 2004.

\bibitem{msw2001:tgomsas1}
H.~Martini, K.~J. Swanepoel, and G.~Wei{\ss}.
\newblock The geometry of {M}inkowski spaces---a survey. {I}.
\newblock {\em Expo. Math.}, 19(2):97--142, 2001.

\bibitem{mw2009:oosiscnp}
H.~Martini and S.~Wu.
\newblock On orthocentric systems in strictly convex normed planes.
\newblock {\em Extracta Math.}, 24(1):31--45, 2009.

\bibitem{ms2006:aarc}
Horst Martini and K.~J. Swanepoel.
\newblock Antinorms and {R}adon curves.
\newblock {\em Aequationes Math.}, 72(1-2):110--138, 2006.

\bibitem{m2000:swef}
P.~McMullen.
\newblock Simplices with equiareal faces.
\newblock {\em Discrete Comput. Geom.}, 24(2-3):397--411, 2000.
\newblock The Branko Gr{\"u}nbaum birthday issue.

\bibitem{tt2015:rotcom}
S.~Tabachnikov and E.~Tsukerman.
\newblock Remarks on the circumcenter of mass.
\newblock {\em Arnold Math. J.}, 1(2):101--112, 2015.

\bibitem{t1996:mg}
A.~C. Thompson.
\newblock {\em Minkowski {G}eometry}, volume~63 of {\em Encyclopedia of
  Mathematics and its Applications}.
\newblock Cambridge University Press, Cambridge, 1996.

\bibitem{v2017:oocinp}
J.~V\"ais\"al\"a.
\newblock Observations on circumcenters in normed planes.
\newblock {\em Beitr. Algebra Geom.}, 58(3):607--615, 2017.

\end{thebibliography}
\end{document}